\numberwithin{equation}{section} 
\theoremstyle{plain}
\newtheorem{thm}{Theorem}[section]
\newtheorem{cor}[thm]{Corollary}
\newtheorem{lem}[thm]{Lemma}
\newtheorem{prop}[thm]{Proposition}
\theoremstyle{definition}
\newtheorem{defn}{Definition}[section]
\newtheorem{rmk}{Remark}[section]
\newcommand{\eps}{\varepsilon}
\DeclareMathOperator*{\esssup}{esssup}
\DeclareMathOperator*{\essinf}{essinf}
\newcommand{\cO}{\mathcal{O}}
\newcommand{\cL}{\mathcal{L}}
\newcommand{\cT}{\mathcal{T}}
\newcommand{\cB}{\mathcal{B}}
\newcommand{\cA}{\mathcal{A}}
\newcommand{\cS}{\mathcal{S}}
\newcommand{\cG}{\mathcal{G}}
\newcommand{\cU}{\mathcal{U}}
\newcommand{\bH}{\mathbb{H}}
\newcommand{\bP}{\mathbb{P}}
\newcommand{\bR}{\mathbb{R}}
\newcommand{\bN}{\mathbb{N}}
\newcommand{\sF}{\mathscr{F}}
\newcommand{\sP}{\mathscr{P}}
\begin{document}
\ID{E13-xxx} 
 \DATE{Final, 2018-07-12} 
 \PageNum{1}
 \Volume{201x}{}{3x}{x} 
 \EditorNote{$^*$Received July 18, 2018; revised January 5, 2019. Jinniao Qiu is partially supported by the National Science and Engineering Research Council of Canada (NSERC) and by the start-up funds from the University of Calgary. The support of the NSERC grant of Professor Robert Elliott for Wenning Wei is gratefully acknowledged.} 

\abovedisplayskip 6pt plus 2pt minus 2pt \belowdisplayskip 6pt
plus 2pt minus 2pt
\def\vsp{\vspace{1mm}}
\def\th#1{\vspace{1mm}\noindent{\bf #1}\quad}
\def\proof{\vspace{1mm}\indent{\bf Proof}\quad}
\def\no{\nonumber}
\newenvironment{prof}[1][Proof]{\indent\textbf{#1}\quad }
{\hfill $\Box$\vspace{0.7mm}}
\def\q{\quad} \def\qq{\qquad}
\allowdisplaybreaks[4]


\AuthorMark{Qiu\,\&  Wei}                             

\TitleMark{\uppercase{Viscosity solutions of Stochastic HJ equations}}  

\title{\uppercase{ Uniqueness of Viscosity Solutions of Stochastic Hamilton-Jacobi Equations}        
\footnote{ }}                 
\author{\sl{Jinniao \uppercase{Qiu}}\footnotemark[2]}    
   { Department of Mathematics \& Statistics, University of Calgary, 2500 University Drive NW, Calgary, AB T2N 1N4, Canada.\\
    E-mail\,$:$ jinniao.qiu@ucalgary.ca}

\author{\sl{Wenning \uppercase{Wei}}\footnotemark[2]}    
{Department of Mathematics \& Statistics, University of Calgary, 2500 University Drive NW, Calgary, AB T2N 1N4, Canada.\\
    E-mail\,$:$ wenning.wei@ucalgary.ca}  
\maketitle%

\Abstract{This paper is devoted to the study of fully nonlinear stochastic Hamilton-Jacobi (HJ) equations for the optimal stochastic control problem of ordinary differential equations with random coefficients.  Under the standard Lipschitz continuity assumptions on the coefficients, the value function is proved to be the unique viscosity solution of the associated stochastic HJ equation.}      

\Keywords{stochastic Hamilton-Jacobi equation, optimal stochastic control, backward stochastic partial differential equation, viscosity solution}        

\MRSubClass{49L20, 49L25, 93E20, 35D40, 60H15}      

\section{Introduction}
Let $(\Omega,\sF,\{\sF_t\}_{t\geq0},\bP)$ be a complete filtered probability space with the filtration satisfying the usual conditions and generated by an $m$-dimensional Wiener process $W=\{W_t:t\in[0,\infty)\}$ together with all the
$\bP$-null sets in $\sF$. The predictable $\sigma$-algebra  on $\Omega\times[0,T]$ associated with $\{\sF_t\}_{t\geq0}$ is denoted by  $\sP$.  

This paper is devoted to the uniqueness of viscosity solution to the following stochastic Hamilton-Jacobi (HJ) equation:
\begin{equation}\label{SHJB}
  \left\{\begin{array}{l}
  \begin{split}
  -du(t,x)=\,& 
 \mathbb{H}(t,x,Du) 
 \,dt -\psi(t,x)\, dW_{t}, \quad
                     (t,x)\in Q:=[0,T)\times\bR^d;\\
    u(T,x)=\, &G(x), \quad x\in\bR^d,
    \end{split}
  \end{array}\right.
\end{equation}
with 
\begin{align*}
\mathbb{H}(t,x,p)
= \essinf_{v\in U} \bigg\{
       \beta'(t,x,v)p +f(t,x,v)
                \bigg\},\quad \text{for } p\in \bR^d,  
\end{align*}
  where $T\in (0,\infty)$ is a fixed deterministic terminal time, $U\subset \bR^n$ is a nonempty compact set and both the random fields $u(t,x)$ and $\psi(t,x)$ are  unknown.

Stochastic HJ equations like \eqref{SHJB} arise naturally from optimal stochastic control problems of the following form:
\begin{align}
\inf_{\theta\in\cU}E\left[\int_0^T\!\! f(s,X_s,\theta_s)\,ds +G(X_T) \right] \label{Control-probm}
\end{align}
subject to
\begin{equation}\label{state-proces-contrl}
\left\{
\begin{split}
&dX_t=\beta(t,X_t,\theta_t)dt. ,\,\,
\,t\in[0,T]; \\
& X_0=x,
\end{split}
\right.
\end{equation}
 where $\cU$ is the set of all the $U$-valued and $\sF_t$-adapted processes and the coefficients $\beta,f$ and $G$ depend not only on time, space and control but also \textit{explicitly} on $\omega \in\Omega$ (see assumption $(\cA1)$).   The state process $(X_t)_{t\in[0,T]}$  is governed by the {\sl control} $\theta\in\cU$, and to indicate the dependence of the state process on the control $\theta$, the initial time $r$ and initial state $x\in \mathbb{R}^d$, we also write $X^{r,x;\theta}_t$ for $0\leq r\leq t\leq T$. Following the dynamic programming method, we may define the dynamic cost functional
 \begin{align}
J(t,x;\theta)=E_{\sF_t}\left[\int_t^T\!\! f(s,X^{t,x;\theta}_s,\theta_s)\,ds +G(X^{t,x;\theta}_T) \right],\ \ t\in[0,T].
 \label{eq-cost-funct}
\end{align}
Here and throughout this work, we use $E_{\sF_t}[\,\cdot\,]$ to denote the conditional expectation given $\sigma$-algebra $\sF_t$ for each $t\geq 0$. Then it is proved that the value function
\begin{align}
V(t,x)=\essinf_{\theta\in\cU}J(t,x;\theta),\quad t\in[0,T],
\label{eq-value-func}
\end{align}
 is a viscosity solution of the stochastic HJ equation \eqref{SHJB} (see \cite[Theorem 4.2]{qiu2017viscosity}).
 
 The stochastic HJ equation \eqref{SHJB}, because of the vanishing diffusion coefficients in the controlled differential equation \eqref{state-proces-contrl}, may be regarded as a degenerate case of fully nonlinear stochastic Hamilton-Jacobi-Bellman (HJB) equations that were first introduced by Peng \cite{Peng_92}. Peng proved the existence and uniqueness of weak solutions in Sobolev spaces for the superparabolic semilinear stochastic HJB equations in \cite{Peng_92}, while the wellposedness of general cases was claimed as an open problem, referring to Peng's plenary lecture of ICM 2010 \cite{peng2011backward}. In fact, the stochastic HJ equations are a class of backward stochastic partial differential equations (BSPDEs) which have been studied since  about forty years ago (see\cite{Pardoux1979}). The linear and semilinear BSPDEs have been extensively studied; we refer to \cite{DuQiuTang10,Hu_Ma_Yong02,ma2012non,Tang-Wei-2013} among many others. For the weak solutions and associated local behavior analysis for general quasi-linear BSPDEs, see \cite{QiuTangMPBSPDE11}, and we refer to \cite{Horst-Qiu-Zhang-14} for BSPDEs with singular terminal conditions. In the recent work \cite{Qiu2014weak}, the first author studied the weak solution in Sobolev spaces for a special class of the fully nonlinear stochastic HJB equations (with $\beta\equiv 0$ and $\sigma(t,x,v)\equiv v$).

More recently, a notion of viscosity solution was proposed in \cite{qiu2017viscosity} for general fully nonlinear stochastic HJB equations. In \cite{qiu2017viscosity}, the value function $V$ was verified to be the maximal viscosity solution under certain assumptions on the regularity of coefficients (see $(\cA^*)$ in Remark \ref{rmk-unique}), and further for the superparabolic cases when the diffusion coefficients $\sigma$ do not depend explicitly on $\omega\in\Omega$, the uniqueness is proved.  In this paper, we shall drop the strong assumptions on regularity of coefficients (see Remark \ref{rmk-unique}) and prove the uniqueness of viscosity solution to stochastic HJ equation \eqref{SHJB} corresponding to a degenerate fully nonlinear case of \cite{qiu2017viscosity}.

 Recalling heuristically the notion of viscosity solution proposed in \cite{qiu2017viscosity}, we may think of the concerned random fields like the first unknown variable $u$ and the value function $V$ as stochastic differential equations (SDEs) of the following form:
\begin{align}
u(t,x)=u(T,x)-\int_{t}^T\mathfrak{d}_{s} u(s,x)\,ds-\int_t^T\mathfrak{d}_{w}u(s,x)\,dW_s,\quad (t,x)\in[0,T]\times\bR^d. \label{SDE-u}
\end{align}
The Doob-Meyer decomposition theorem implies the uniqueness of the pair $(\mathfrak{d}_tu,\,\mathfrak{d}_{\omega}u)$ and thus makes sense of the linear operators $\mathfrak{d}_t$ and $\mathfrak{d}_{\omega}$ which actually coincide with the two differential operators introduced by Le$\tilde{\text{a}}$o, Ohashi and Simas in \cite[Theorem 4.3]{Leao-etal-2018}. In fact, an earlier discussion on operator $ \mathfrak{d}_{\omega}u$ may be found in \cite[Section 5.2]{cont2013-founctional-aop}. Through comparison, we have $\psi=\mathfrak{d}_{\omega}u$ and solving \eqref{SHJB} with a pair $(u,\psi)$ is equivalent to seeking $u$ (of form \eqref{SDE-u}) satisfying 
\begin{equation}\label{SHJB-eqiv}
  \left\{\begin{array}{l}
  \begin{split}
  -\mathfrak{d}_tu(t,x)
- \mathbb{H}(t,x,Du(t,x) )&=0,  \quad
                     (t,x)\in Q;\\
    u(T,x)&= G(x), \quad x\in\bR^d.
    \end{split}
  \end{array}\right.
\end{equation}
The equivalence relation between \eqref{SHJB} and \eqref{SHJB-eqiv} provides the key to defining the viscosity solutions for stochastic HJ equations. As a standard assumption in the general stochastic control theory, all the involved coefficients herein are only measurable w.r.t. $\omega$ on the sample space $(\Omega,\sF)$ and this challenge prevents us from defining the viscosity solutions in a point-wise manner, while motivating us to use a class of random fields of form \eqref{SDE-u} with sufficient spacial regularity as test functions. At each point $(\tau,\xi)$ ($\tau$ may be stopping time and $\xi$ may be an $\bR^d$-valued $\sF_{\tau}$-measurable variable) the classes of test functions are also parameterized by $\Omega_{\tau}\in\sF_{\tau}$. Another challenge is from the  nonanticipativity constraints on the unknown variables, which makes the classical variable-doubling techniques for deterministic HJ equations inapplicable in the proof of uniqueness for stochastic equations like \eqref{SHJB}. In this work, we first prove that the value function is the maximal viscosity (sub)solution which in fact reveals a weak version of comparison principle, and then through approximations, the value function is verified to be the unique one on basis of the established comparison results.

We refer to \cite{crandall2000lp,crandall1992user,juutinen2001definition,wang1992-I} among many others for the theory of (deterministic) viscosity solutions and \cite{buckdahn2007pathwise,LionsSouganidis1998b} for the stochastic viscosity solutions of (forward) SPDEs. Note that the (backward) stochastic HJB equations like  \eqref{SHJB} and the (forward) ones studied in  \cite{buckdahn2007pathwise,LionsSouganidis1998b} 
are essentially different, i.e., the noise term in the latter is exogenous, while in the former it is governed by the coefficients through the martingale representation and thus endogenous. 

When the coefficients $\beta,f$ and $G$ are deterministic functions of time $t$, control $\theta$ and the paths of $X$ and $W$, the optimal stochastic control problem is beyond the classical Markovian framework and the value function can be characterized by a path-dependent PDE. We refer to \cite{ekren2014viscosity,ekren2016viscosity-1,lukoyanov2007viscosity,peng2011note} for the theory of viscosity solutions of  such nonlinear path-dependent PDEs. In particular, in  \cite{ekren2014viscosity,ekren2016viscosity-1}, the authors applied the path-dependent viscosity solution theory to some classes of stochastic HJB equations which, however, required all the coefficients to be continuous in $\omega\in  \Omega$ due to the involved pathwise analysis. We would stress that, in the present work, all the involved coefficients are only measurable w.r.t. $\omega\in \Omega$ and we even do not need to specify any topology on $\Omega$, which allows the general random variables to appear in the coefficients.

The rest of this paper is organized as follows. In Section 2, we introduce in the first subsection some notations and the standing assumptions on the coefficients, and in the second subsection, the main result is exhibited. Two auxiliary results are presented in Section 3. Finally, Section 4 is devoted to the proof of our main result; we verify in the first subsection that the value function is the maximal viscosity solution  and then the uniqueness of viscosity solution is derived in the second subsection.

\section{Preliminaries and main result}

\subsection{Preliminaries}

Throughout this paper, we write $(s,y)\rightarrow (t^+,x)$, meaning that $s \downarrow t$ and $y\rightarrow x$.

Let $\mathbb B $ be a Banach space equipped with norm $\|\cdot\|_{\mathbb B }$. For each $t\in[0,T]$, denote by $L^0(\Omega,\sF_t;\mathbb B)$ the space of $\mathbb B$-valued $\sF_t$-measurable random variables. For $p\in[1,\infty]$, $\cS ^p ({\mathbb B })$ is the set of all the ${\mathbb B }$-valued,
 $\sP$-measurable continuous processes $\{\mathcal X_{t}\}_{t\in [0,T]}$ such
 that
{\small $$\|\mathcal X\|_{\cS ^p({\mathbb B })}:= \left\|\sup_{t\in [0,T]} \|\mathcal X_t\|_{\mathbb B }\right\|_{L^p(\Omega,\sF,\bP)}< \infty.$$
}
 Denote by $\mathcal{L}^p({\mathbb B })$ the totality of all  the ${\mathbb B }$-valued,
  $\sP$-measurable processes $\{\mathcal X_{t}\}_{t\in [0,T]}$ such
 that
 {\small
 $$
 \|\mathcal X\|_{\mathcal{L}^p({\mathbb B })}:=\left\| \bigg(\int_0^T \|\mathcal X_t\|_{\mathbb B }^p\,dt\bigg)^{1/p} \right\|_{L^p(\Omega,\sF,\bP)}< \infty.
 $$
 }
Obviously, $(\cS^p({\mathbb B }),\,\|\cdot\|_{\cS^p({\mathbb B })})$ and $(\mathcal{L}^p({\mathbb B }),\|\cdot\|_{\mathcal{L}^p({\mathbb B })})$
are Banach spaces.
For each $(k,q)\in \mathbb{N}_0\times [1,\infty]$ we define  the $k$-th Sobolev space $(H^{k,q},\|\cdot\|_{k,q})$ as usual, and for each domain $\cO\subset \bR^d$, denote by $C^k(\cO;\mathbb B)$ the space of $\mathbb B$-valued functions with the up to $k$-th order derivatives being bounded and continuous on $\cO$, $C^k_0(\cO;\mathbb B) $ being the subspace of $C^k(\cO;\mathbb B)$ vanishing on the boundary $\partial \cO$. If there is no confusion about $\mathbb B$, we will omit $\mathbb B$ and just write $C^k(\cO)$ and $C^k_0(\cO)$.   When $k=0$, write $C_0(\cO) $ and $C(\cO)$ simply. Through this paper, we define  $ C^{\infty}(\cO;\mathbb{B})=\cap_{k\in\bN^+} C^k(\cO;\mathbb{B})$ and
$$
C_0^{\infty}(\cO;\mathbb{B})=\cap_{k\in\bN^+} C_0^k(\cO;\mathbb{B}),\quad
\cS^p(C_{loc}(\bR^d))=\cap_{N>0} \cS^{p}(C(B_N(0))),\quad \text{ for } p\in[1,\infty].
$$


Throughout this work, we use the following assumption.

\bigskip
   $({\mathcal A} 1)$ \it $G\in L^{\infty}(\Omega,\sF_T;H^{1,\infty})$. For the coefficients $g=f,\beta^i$ $(1\leq i \leq d)$, \\
(i) $g:~\Omega\times[0,T]\times\bR^d\times U\rightarrow\bR$  {is}
$\sP\otimes\cB(\bR^d)\otimes\cB(U)\text{-measurable}$;\\
(ii) for almost all $(\omega,t)\in\Omega\times [0,T]$, $g(t,x,v)$ is uniformly continuous on $\bR^d\times U$;\\
(iii) there exists $L>0$ such that 
\begin{align*}
\|G\|_{L^{\infty}(\Omega,\sF_T;H^{1,\infty})} + 
\sup_{v\in\cU} \|g(\cdot,\cdot,v)\|_{\cS^{\infty}(H^{1,\infty})} \leq L.
\end{align*}
\rm

\subsection{Main Result}

We first introduce the test function space for viscosity solutions.

\begin{defn}\label{defn-testfunc}
For $u\in \cS^{2} (C_{loc}(\bR^d))$ with $Du\in \cL^2(C(\bR^d))$, we say $u\in \mathscr C_{\sF}^1$ if 
there exists $(\mathfrak{d}_tu, \,\mathfrak{d}_{\omega}u)\in \cL^2(C(\bR^d))\times  \cL^2(C(\bR^d))$ such that with probability 1
\begin{align*}
u(r,x)=u(T,x)-\int_r^T \mathfrak{d}_su(s,x)\,ds -\int_r^T\mathfrak{d}_{\omega}u(s,x)\,dW_s,\quad \forall\,(r,x)\in[0,T]\times\bR^d.
\end{align*}
\end{defn}

\begin{rmk}\label{rmk-test}
Instead of $ \mathscr C_{\sF}^2$ defined in \cite{qiu2017viscosity} which requires $D^2u $ and $ D \mathfrak{d}_{\omega}u$ to be lying in  $\cL^2(C(\bR^d))$, we use $ \mathscr C_{\sF}^1$ which imposes no requirement on $D^2u $ or $ D \mathfrak{d}_{\omega}u$. This is basically because the two terms $D^2u $ and $ D \mathfrak{d}_{\omega}u$ are not involved in the first-order BSPDE \eqref{SHJB}. Analogous to the space $ \mathscr C_{\sF}^2$ in \cite{qiu2017viscosity}, by Definition \ref{defn-testfunc}, we have in fact characterized the two linear operators $\mathfrak{d}_t$ and $\mathfrak{d}_{\omega}$ which is consistent with the two differential operators w.r.t. the paths of Wiener process $W$ in the sense of \cite{Leao-etal-2018}, defined via  a finite-dimensional approximation procedure based on controlled inter-arrival times and approximating martingales; in particular, for the operator $ \mathfrak{d}_{\omega}u$, an earlier discussion may be found in \cite[Section 5.2]{cont2013-founctional-aop}.   
We would also note that the operators $\mathfrak{d}_t$ and $\mathfrak{d}_{\omega}$ here are different from the path derivatives $(\partial_t,\,\partial_{\omega})$ via the functional It\^o formulas (see \cite{buckdahn2015pathwise} and \cite[Section 2.3]{ekren2016viscosity-1}). If $u(\omega,t,x)$ is smooth enough w.r.t. $(\omega,t)$ in the path space,  for each $x$,  we have the relation 
$$\mathfrak{d}_tu(\omega,t,x)=\left(\partial_t+\frac{1}{2}\partial^2_{\omega\omega}\right)u(\omega,t,x),\quad \mathfrak{d}_{\omega}u(\omega,t,x)=  \partial_{\omega}u(\omega,t,x),$$      which can be seen either from the applications in \cite[Section 6]{ekren2016viscosity-1} to BSPDEs or from a rough view on the pathwise viscosity solution of (forward) SPDEs in \cite{buckdahn2015pathwise}.
\end{rmk}


For each stopping time $t\leq T$, denote by $\mathcal{T}^t$ the set of stopping times $\tau$ satisfying $t\leq \tau \leq T$ and by $\mathcal{T}^t_+$ the subset of $\mathcal{T}^t$ such that $\tau>t$ for any $\tau\in \mathcal{T}^t_+$. For each $\tau\in\mathcal T^0$ and $\Omega_{\tau}\in\sF_{\tau}$, we denote by $L^0(\Omega_{\tau},\sF_{\tau};\bR^d)$ the set of $\bR^d$-valued $\sF_{\tau}$-measurable functions. 

We now introduce the notion of viscosity solutions. For each $(u,\tau)\in \cS^{2}(C_{loc}(\bR^d))\times \mathcal T^0$, $\Omega_{\tau}\in\sF_{\tau}$ with $\mathbb P(\Omega_{\tau})>0$ and $\xi\in L^0(\Omega_{\tau},\sF_{\tau};\bR^d)$, we define
\begin{align*}
\underline{\mathcal{G}}u(\tau,\xi;\Omega_{\tau})
:=\bigg\{
\phi\in\mathscr C^1_{\sF}:(\phi-u)(\tau,\xi)1_{\Omega_{\tau}}=0=\essinf_{\bar\tau\in\mathcal T^{\tau}} E_{\sF_{\tau}} \!\!\left[\inf_{y\in B_{\delta}(\xi)}
(\phi-u)(\bar\tau\wedge \hat{\tau},y)
\right]1_{\Omega_{\tau}}  \text{ a.s.}
&\\
\text{for some } (\delta,\hat\tau) \in (0,\infty)\times\mathcal T^{\tau}_+
\bigg\},&\\
\overline{\mathcal{G}}u(\tau,\xi;\Omega_{\tau}):=\bigg\{
\phi\in\mathscr C^1_{\sF}:(\phi-u)(\tau,\xi)1_{\Omega_{\tau}}=0=\esssup_{\bar\tau\in\mathcal T^{\tau}} \!\! E_{\sF_{\tau}}\left[\sup_{y\in B_{\delta}(\xi)}
(\phi-u)(\bar\tau\wedge \hat{\tau},y)
\right]1_{\Omega_{\tau}}  \text{ a.s.}
&\\
\text{for some } (\delta,\hat\tau) \in (0,\infty)\times\mathcal T^{\tau}_+
\bigg\}.&
\end{align*}
It is obvious that if $\underline{\mathcal{G}}u(\tau,\xi;\Omega_{\tau})$ or $\overline{\mathcal{G}}u(\tau,\xi;\Omega_{\tau})$ is nonempty, we must have $0\leq\tau <T$ on $\Omega_{\tau}$.  

Now it is at the stage to introduce the definition of viscosity solutions.

\begin{defn}\label{defn-viscosity}
We say $u\in \cS^2(C_{loc}(\bR^d))$ is a viscosity subsolution (resp. supersolution) of BSPDE \eqref{SHJB}, if $u(T,x)\leq (\text{ resp. }\geq) G(x)$ for all $x\in\bR^d$ a.s., and for any $\tau\in  \mathcal T^0$, $\Omega_{\tau}\in\sF_{\tau}$ with $\mathbb P(\Omega_{\tau})>0$ and $\xi\in L^0(\Omega_{\tau},\sF_{\tau};\bR^d)$ and any $\phi\in \underline{\cG}u(\tau,\xi;\Omega_{\tau})$ (resp. $\phi\in \overline{\cG}u(\tau,\xi;\Omega_{\tau})$), there holds
\begin{align}
&\text{ess}\liminf_{(s,x)\rightarrow (\tau^+,\xi)}
	E_{\sF_{\tau}} \left\{ -\mathfrak{d}_{s}\phi(s,x)-\bH(s,x,D\phi(s,x)) \right\}  \leq\ \,0, \text{ for almost all } \omega\in\Omega_{\tau}
\label{defn-vis-sub}
\end{align}
\begin{align}
\text{(resp.} \quad &\text{ess}\!\!\limsup_{(s,x)\rightarrow (\tau^+,\xi)} 
	\!\!E_{\sF_{\tau}} 
		\left\{ -\mathfrak{d}_{s}\phi(s,x)-\bH(s,x,D\phi(s,x)) \right\}  \geq\ \,0,  \text{ for almost all } \omega\in\Omega_{\tau}\text{).}
\label{defn-vis-sup}
\end{align}
 
The function $u$ is a viscosity solution of BSPDE \eqref{SHJB} if it is both a viscosity subsolution and a viscosity supersolution of \eqref{SHJB}.
\end{defn}

The stochastic HJ equation \eqref{SHJB} is a particular case of \cite[Theorem 4.2]{qiu2017viscosity} with vanishing diffusion coefficients. Therefore, as a straightforward consequence, we have the following existence of viscosity solution of BSPDE \eqref{SHJB}. 
\begin{thm}\label{thm-existence} (see \cite[Theorem 4.2]{qiu2017viscosity}). Let $(\cA1)$ hold. 
 The value function $V$ defined by \eqref{eq-value-func} is a viscosity solution of the stochastic Hamilton-Jacobi equation \eqref{SHJB}  in $\cS^{2}(C(\bR^d))$. 
\end{thm}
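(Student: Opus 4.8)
The plan is to derive Theorem~\ref{thm-existence} as a direct specialization of \cite[Theorem 4.2]{qiu2017viscosity} to the degenerate situation in which the diffusion coefficients of the controlled state process vanish. First I would observe that the control problem \eqref{Control-probm}--\eqref{state-proces-contrl} and the value function \eqref{eq-value-func} are exactly those treated in \cite{qiu2017viscosity} once the diffusion coefficient $\sigma$ there is set to zero, so that the associated fully nonlinear stochastic HJB equation collapses precisely to the first-order equation \eqref{SHJB}. Under $(\cA1)$ the coefficients $\beta,f,G$ are uniformly Lipschitz in $x$ with the stated bounds, which is the set of hypotheses under which \cite[Theorem 4.2]{qiu2017viscosity} applies; invoking it yields both the regularity $V\in\cS^{2}(C(\bR^d))$ (via the standard Lipschitz estimates on the value function) and the fact that $V$ is a viscosity solution of \eqref{SHJB} in the sense of \cite{qiu2017viscosity}.

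The one point requiring attention is that the notion of viscosity solution in \cite{qiu2017viscosity} employs test functions from $\mathscr{C}^2_{\sF}$, whereas Definition~\ref{defn-viscosity} admits the strictly larger class $\mathscr{C}^1_{\sF}\supset\mathscr{C}^2_{\sF}$ (cf. Remark~\ref{rmk-test}); enlarging the test-function class strengthens the sub- and supersolution requirements, so the inequalities \eqref{defn-vis-sub}--\eqref{defn-vis-sup} must be verified against every $\phi\in\mathscr{C}^1_{\sF}$, not only against $\phi\in\mathscr{C}^2_{\sF}$. I would close this gap by mollification in the spatial variable: given $\phi\in\mathscr{C}^1_{\sF}$ and a mollifier $\rho_{\eps}$ on $\bR^d$, set $\phi_{\eps}=\phi*\rho_{\eps}$, which lies in $\mathscr{C}^2_{\sF}$ with $\mathfrak{d}_s\phi_{\eps}=(\mathfrak{d}_s\phi)*\rho_{\eps}$ and $\mathfrak{d}_{\omega}\phi_{\eps}=(\mathfrak{d}_{\omega}\phi)*\rho_{\eps}$ (the spatial convolution commuting with the drift and stochastic integrals by Fubini, and producing the missing $D^2\phi_{\eps}, D\mathfrak{d}_{\omega}\phi_{\eps}\in\cL^2(C(\bR^d))$). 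Since $\bH(s,x,p)$ depends on $u$ only through the gradient $p=Du$, and since $D\phi_{\eps}\to D\phi$ and $\mathfrak{d}_s\phi_{\eps}\to\mathfrak{d}_s\phi$ locally uniformly in $x$ as $\eps\downarrow0$, the inequality known for $\phi_{\eps}$ in the $\mathscr{C}^2_{\sF}$ sense would pass to the limit and deliver the corresponding inequality for $\phi$.

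The main obstacle is that $\phi_{\eps}$ need not satisfy the touching condition defining $\underline{\cG}V(\tau,\xi;\Omega_{\tau})$ (resp. $\overline{\cG}V(\tau,\xi;\Omega_{\tau})$): mollification perturbs the values of $\phi$, so $(\phi_{\eps}-V)(\tau,\xi)1_{\Omega_{\tau}}$ is no longer zero and the essential-infimum-over-stopping-times minimality may be lost. The plan here is to correct $\phi_{\eps}$ by subtracting the scalar $(\phi_{\eps}-\phi)(\tau,\xi)$ and adding a vanishing quadratic correction $\pm c_{\eps}|x-\xi|^2$ (with $c_{\eps}\downarrow0$ chosen according to the local modulus of $\phi_{\eps}-\phi$, the sign matching the subsolution or supersolution case) so as to restore a strict local extremum of the difference at $(\tau,\xi)$, and thereby place the corrected test function in $\underline{\cG}V(\tau,\xi;\Omega_{\tau})$ (resp. $\overline{\cG}V(\tau,\xi;\Omega_{\tau})$) in the $\mathscr{C}^2_{\sF}$ sense. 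Controlling this correction uniformly while simultaneously handling the conditional expectation $E_{\sF_{\tau}}[\,\cdot\,]$, the $\text{ess}\liminf$ as $(s,x)\to(\tau^+,\xi)$, the stopping-time structure, and the localization $1_{\Omega_{\tau}}$ is the delicate part; once it is in place, passing $\eps\downarrow0$ is routine and the remaining assertions follow verbatim from \cite[Theorem 4.2]{qiu2017viscosity}.
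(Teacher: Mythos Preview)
Your reduction to \cite[Theorem~4.2]{qiu2017viscosity} is the right starting point, but you have taken an unnecessarily indirect route to handle the discrepancy between $\mathscr C^2_{\sF}$ and $\mathscr C^1_{\sF}$. The paper's argument is much shorter: it simply observes that the \emph{proof} of \cite[Theorem~4.2]{qiu2017viscosity} goes through verbatim for test functions in $\mathscr C^1_{\sF}$, because that proof never actually uses the quantities $D^2\phi$ or $D\mathfrak d_\omega\phi$. Those terms only enter the second-order HJB operator through the diffusion coefficient $\sigma$; here $\sigma\equiv 0$, so the operator in \eqref{defn-vis-sub}--\eqref{defn-vis-sup} involves only $\mathfrak d_s\phi$ and $D\phi$, and every step of the existence proof (the dynamic programming inequalities, the It\^o--Kunita expansion of $\phi$ along the controlled trajectories, and the limiting arguments) requires nothing beyond $\mathscr C^1_{\sF}$ regularity. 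In other words, one does not deduce the $\mathscr C^1_{\sF}$ statement from the $\mathscr C^2_{\sF}$ statement; one re-runs the same proof with the weaker hypothesis.

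Your mollification-and-correction scheme, by contrast, is attempting to transfer the \emph{conclusion} from $\mathscr C^2_{\sF}$ to $\mathscr C^1_{\sF}$, and the obstacle you flag is genuine and not obviously resolvable as sketched. The touching condition in $\underline{\cG}V(\tau,\xi;\Omega_\tau)$ is an optimal-stopping constraint of the form $\essinf_{\bar\tau}E_{\sF_\tau}[\inf_{y\in B_\delta(\xi)}(\phi-V)(\bar\tau\wedge\hat\tau,y)]1_{\Omega_\tau}=0$, and a spatial quadratic penalty $\pm c_\eps|x-\xi|^2$ does not straightforwardly restore this after mollification: the perturbation $\phi_\eps-\phi$ is random in $(\omega,t)$ as well as $x$, so controlling the conditional-expectation infimum over stopping times by a deterministic-in-time spatial correction is not clear. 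Moreover, $\xi\in L^0(\Omega_\tau,\sF_\tau;\bR^d)$ has no integrability assumed, so $|x-\xi|^2$ need not lie in $\cS^2(C_{loc}(\bR^d))$, and the scalar shift $(\phi_\eps-\phi)(\tau,\xi)$ is only $\sF_\tau$-measurable and must be extended to an element of $\mathscr C^2_{\sF}$ on all of $[0,T]$. These issues can perhaps be patched, but the paper's approach bypasses them entirely.
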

We note that even though the test function space used in \cite{qiu2017viscosity} is $\mathscr C^2_{\sF}$ instead of $\mathscr C^1_{\sF}$,  the proof of Theorem  \ref{thm-existence} follows exactly the same as that of \cite[Theorem 4.2]{qiu2017viscosity} as there would be no term involving $D^2u$ or $D \mathfrak{d}_{\omega}u$ in the proof.

Our main result is focused on the uniqueness.
\begin{thm}\label{thm-main} Let $(\cA1)$ hold. 
 The viscosity solution to stochastic HJ equation \eqref{SHJB} is unique in $\cS^2(C(\bR^d))$.
\end{thm}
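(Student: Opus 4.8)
The plan is to establish uniqueness by proving a two-sided comparison result: the value function $V$ is simultaneously the \emph{maximal} viscosity subsolution and the \emph{minimal} viscosity supersolution. Since Theorem \ref{thm-existence} already guarantees that $V$ is a viscosity solution, if I can show that any viscosity subsolution $u$ satisfies $u\leq V$ and any viscosity supersolution $w$ satisfies $w\geq V$ on $Q$ almost surely, then any viscosity solution is squeezed between two copies of $V$ and must equal $V$. The core of the work is therefore the comparison principle, which the paper itself advertises (in the introduction) as being proved in two stages: first that $V$ is the maximal (sub)solution, revealing a weak comparison, and then a refinement via approximation.

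\textbf{Key steps.}
First I would fix an arbitrary viscosity subsolution $u\in\cS^2(C(\bR^d))$ and aim to show $u(t,x)\leq V(t,x)$. The essential obstacle, as the authors flag, is that the classical variable-doubling technique of deterministic viscosity theory is unavailable, because the nonanticipativity constraint forbids us from freely perturbing the time variable in $\omega$. The substitute is the special structure of the test-function class $\mathscr C^1_{\sF}$: a test function $\phi$ is itself an It\^o process of the form \eqref{SDE-u}, so the natural perturbations are BSPDE solutions rather than static smooth bumps. Concretely, I expect to construct, for small parameters, a comparison test function by solving an auxiliary linear BSPDE (or a penalized/regularized version of \eqref{SHJB-eqiv}) whose generator dominates $\bH$, and then use the subsolution inequality \eqref{defn-vis-sub} at a suitably chosen stopping time $\tau$ and $\sF_{\tau}$-measurable point $\xi$ where the difference $u-\phi$ attains a (localized, essential) maximum. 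The auxiliary results announced in Section 3 of the paper are presumably exactly the regularity and existence facts for such auxiliary BSPDEs, together with a measurable-selection/optimal-stopping lemma that lets one pick the contact point $(\tau,\xi)$ in the $\sF_{\tau}$-measurable sense demanded by $\underline{\cG}u(\tau,\xi;\Omega_{\tau})$.

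\textbf{The approximation step.}
Because the Hamiltonian $\bH$ is built from coefficients that are only measurable in $\omega$ and merely Lipschitz/uniformly continuous in $(x,v)$ under $(\cA1)$, a direct comparison at the level of viscosity inequalities will only yield a one-sided ``weak'' statement. To upgrade this to genuine equality, I would approximate $\bH$ (equivalently, $\beta$ and $f$) by a sequence of smoother Hamiltonians $\bH_\eps$ obtained via mollification in the spatial variable, for which the associated value functions $V_\eps$ enjoy enough regularity to serve directly as admissible objects in the comparison argument. Using the Lipschitz bound $L$ from $(\cA1)(iii)$ one obtains uniform stability estimates $\|V_\eps-V\|_{\cS^2}\to0$ and the analogous convergence for any solution built from the approximating data; passing to the limit then turns the weak comparison into the sharp sandwich $u\le V\le w$. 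The step I expect to be the genuine difficulty is the construction and verification of the contact point $(\tau,\xi)$ together with the justification that the essential-limit inequalities \eqref{defn-vis-sub}--\eqref{defn-vis-sup} can be combined across the subsolution and the constructed supersolution \emph{pointwise in $\omega$ on a positive-probability event}; this is where the endogenous (martingale-driven) nature of the noise, as contrasted in the introduction with the exogenous-noise SPDE theory of \cite{buckdahn2007pathwise,LionsSouganidis1998b}, forces one to replace pathwise arguments by conditional-expectation and optimal-stopping machinery.
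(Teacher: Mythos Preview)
Your overall architecture matches the paper's: first show any viscosity subsolution lies below $V$ (and dually any supersolution lies above), then use approximation to close the gap. Your description of the maximality step is broadly right---the paper indeed builds, for each fixed control $\theta$, test functions $\hat J_l\in\mathscr C^1_{\sF}$ by solving linear BSPDEs with spatially mollified coefficients, then runs an optimal-stopping/contradiction argument with a penalty $h(x)$ to locate the contact point $(\tau,\xi_\tau)$. So far so good.

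The gap is in your second step. You propose to obtain the reverse inequality by mollifying $\beta,f$ in the spatial variable and arguing that the resulting value functions $V_\eps$ ``enjoy enough regularity to serve directly as admissible objects.'' They do not. Spatial mollification leaves the equation fully degenerate (no diffusion) and leaves the $\omega$-dependence merely measurable; there is no mechanism producing $V_\eps\in\mathscr C^1_{\sF}$, i.e.\ no reason $V_\eps$ admits a Doob--Meyer decomposition with $(\mathfrak d_t V_\eps,\mathfrak d_\omega V_\eps)\in\cL^2(C(\bR^d))$, let alone has $DV_\eps$ continuous. Without that, $V_\eps$ cannot be fed into the weak comparison principle (the paper's Corollary~\ref{cor-cmp}), and the sandwich never closes.

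The paper's actual device is quite different and is the heart of the argument: approximate the coefficients in $\omega$ by smooth functions of finitely many Wiener samples $W_{t_1},\dots,W_{t_N}$ (Lemma~\ref{lem-approx}), \emph{enlarge the probability space} by an independent Brownian motion $B$, and add an artificial diffusion $\delta_N\,dB_t$ to the state dynamics. The associated value function then solves, piecewise on $[t_{j-1},t_j)$, a \emph{uniformly parabolic} deterministic HJB in the enlarged state $(x,y)$ with $y$ standing for the running Wiener value; Krylov's $C^{1+\bar\alpha/2,\,2+\bar\alpha}$ regularity applies, and an It\^o--Kunita computation converts this into an element of $\mathscr C^1_{\bar\sF}$ on the enlarged filtration. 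Adding correction terms $\pm(Y^\eps_s+\delta_N\bar K y_s)$ produces genuine elements of $\overline{\mathscr V}$ and $\underline{\mathscr V}$ that converge to $V$ as $\eps,\delta_N\to 0$. This artificial-diffusion-on-an-enlarged-space trick (see Remark~\ref{rmk-superparab}) is precisely what replaces the superparabolicity assumed in \cite{qiu2017viscosity}, and it is absent from your plan.
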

\begin{rmk}\label{rmk-unique}
The uniqueness is twofold, consisting of the maximality and minimality of the  value function $V$ defined by \eqref{eq-value-func}. In  \cite[Theorem 5.2]{qiu2017viscosity}, it was concerned with the controlled \textit{stochastic} differential equation:
\begin{equation}\label{state-proces-contrl-SDE}
\left\{
\begin{split}
&dX_t=\beta(t,X_t,\theta_t)dt+\sigma(t,X_t,\theta_t)\,dW_t ,\,\,
\,t\in[0,T]; \\
& X_0=x,
\end{split}
\right.
\end{equation}
instead of the controlled \textit{ordinary} differential equation \eqref{state-proces-contrl} with random coefficients, and the value function was just proved to be the maximal viscosity (sub)solution which, however, relies on the following additional strong assumption on the coefficients:\\[4pt]
 $({\mathcal A}^*)$ There exists $q> 2+\frac{d}{2}$ such that $(G(\cdot),f(\cdot,\cdot,\theta) )  \in     L^{2}(\Omega,\sF_T;H^{q,2}) \times \cL^{2}(H^{q,2}) $  for  any $\theta\in\cU$, and  $g(\cdot,\cdot,\theta) \in\cL^{\infty}(H^{q,\infty}) $ for $g=\beta^i,\sigma^{ij}$ $(1\leq i \leq d,\,1\leq j\leq m)$.
\\[4pt] 
In fact, the author in \cite{qiu2017viscosity} only gave a complete uniqueness for superparabolic stochastic HJB equations with the diffusion coefficients depending only on time, state and control (see \cite[Theorem 5.6]{qiu2017viscosity}), while stochastic HJ equation \eqref{SHJB} has vanishing diffusion coefficients ($\sigma\equiv 0$) and thus is degenerate.
\end{rmk}

\section{Auxiliary Results}


In view of assumption $(\cA1)$ and the vanishing diffusion coefficients of stochastic differential equation \eqref{state-proces-contrl}, we may conclude the following assertions straightforwardly from \cite[Lemma 3.1]{qiu2017viscosity}.
\begin{lem}\label{lem-SDE}
Let $(\cA1)$ hold. Given $\theta\in\cU$, for the strong solution of SDE \eqref{state-proces-contrl}, there exists $K>0$  such that, for any $0\leq r \leq t\leq s \leq T$ and $\xi\in L^0(\Omega,\sF_r;\bR^d)$
 \\[3pt]
(i)   the two processes $\left(X_s^{r,\xi;\theta}\right)_{t\leq s \leq T}$ and $\left(X^{t,X_t^{r,\xi;\theta};\theta}_s\right)_{t\leq s\leq T}$ are indistinguishable;\\[2pt]
(ii)  $\max_{r\leq l \leq T} \left|X^{r,\xi;\theta}_l\right| \leq K \left(1+ |\xi|\right)$ a.s.;\\[2pt]
(iii) $\left| X^{r,\xi;\theta}_s-X^{r,\xi;\theta}_t  \right| \leq K \left(1+  |\xi|\right) (s-t)$ a.s.;\\[2pt]
(iv) given another $\hat{\xi}\in L^0(\Omega,\sF_r;\bR^d)$, 
$$
\max_{r\leq l \leq T} \left|X^{r,\xi;\theta}_l-X^{r,\hat\xi;\theta}_l\right| \leq K  |\xi-\hat\xi|
\quad \text{a.s.};
$$
(v) the constant $K$ depends only on $L$ and $T$.
\end{lem}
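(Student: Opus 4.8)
The plan is to treat \eqref{state-proces-contrl} pathwise as a Carath\'eodory ordinary differential equation and to read off each assertion from the elementary theory of ODEs with bounded, globally Lipschitz coefficients; this is exactly the specialization of the stochastic estimates in \cite[Lemma 3.1]{qiu2017viscosity} to the degenerate case $\sigma\equiv0$. The only structural input I would extract from $(\cA1)$ is that, since $\|\beta^i(\cdot,\cdot,v)\|_{\cS^{\infty}(H^{1,\infty})}\le L$ uniformly in $v$, for a.e. $(\omega,t)\in\Omega\times[0,T]$ and every admissible value $v$ one has the componentwise bounds $|\beta^i(t,x,v)|\le L$ and $|\beta^i(t,x,v)-\beta^i(t,y,v)|\le L|x-y|$ (the latter from $\|D_x\beta^i\|_{L^\infty}\le L$ via the fundamental theorem of calculus along the segment joining $x$ and $y$), whence, with a dimensional factor,
\[
|\beta(t,x,v)|\le \sqrt d\,L\quad\text{and}\quad |\beta(t,x,v)-\beta(t,y,v)|\le \sqrt d\,L\,|x-y|,\qquad x,y\in\bR^d.
\]
Combined with the joint measurability and the uniform continuity from $(\cA1)(i)$--$(ii)$, for a.e. fixed $\omega$ the map $s\mapsto\beta(s,\cdot,\theta_s(\omega))$ is integrable in $s$ and globally Lipschitz in space, so Picard--Lindel\"of gives a unique absolutely continuous solution on $[r,T]$; the $\sP$-measurability and $\sF_t$-adaptedness of $X^{r,\xi;\theta}$ are inherited from the Picard iteration, which is what is meant by a \emph{strong solution} here.

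With boundedness and the Lipschitz property secured, the three quantitative bounds are routine. For (ii) I integrate the equation and use $|\beta|\le\sqrt d\,L$ to get $\max_{r\le l\le T}|X^{r,\xi;\theta}_l|\le|\xi|+\sqrt d\,LT\le\max(1,\sqrt d\,LT)(1+|\xi|)$, so $K=\max(1,\sqrt d\,LT)$ suffices. For (iii), the same boundedness yields, for $t\le s$, the estimate $|X^{r,\xi;\theta}_s-X^{r,\xi;\theta}_t|\le\int_t^s|\beta|\,du\le\sqrt d\,L(s-t)$, which is even sharper than the claimed bound (the harmless factor $1+|\xi|$ being inherited from the linear-growth formulation in \cite{qiu2017viscosity}). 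For (iv) I subtract the integral equations for $X^{r,\xi;\theta}$ and $X^{r,\hat\xi;\theta}$, apply the Lipschitz estimate, and invoke Gronwall's inequality to obtain $\max_{r\le l\le T}|X^{r,\xi;\theta}_l-X^{r,\hat\xi;\theta}_l|\le|\xi-\hat\xi|\,e^{\sqrt d\,LT}$; this is the single place where the Lipschitz hypothesis, rather than mere boundedness, is needed.

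For the flow property (i), I would fix the full-measure event on which the pathwise solutions exist and are unique. On that event, restarting the equation at the deterministic time $t$ from the already-determined value $X^{r,\xi;\theta}_t(\omega)$ produces a solution of the same ODE on $[t,T]$ carrying the same datum at $s=t$ as the restriction of $s\mapsto X^{r,\xi;\theta}_s(\omega)$; pathwise uniqueness forces the two to agree for every $s\in[t,T]$, and since both are continuous in $s$ this upgrades to indistinguishability of $(X^{r,\xi;\theta}_s)_{t\le s\le T}$ and $(X^{t,X^{r,\xi;\theta}_t;\theta}_s)_{t\le s\le T}$. Finally, (v) is immediate, since every constant produced above ($\max(1,\sqrt d\,LT)$, $\sqrt d\,L$, $e^{\sqrt d\,LT}$) depends only on $L$, $T$ and the fixed dimension $d$, so one $K=K(L,T)$ dominates all of them. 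I expect the only genuinely delicate point to be the measurability bookkeeping in (i)---ensuring that gluing the restarted flow keeps the process $\sP$-measurable and adapted while the restart value $X^{r,\xi;\theta}_t$ is itself $\sF_t$-measurable---whereas the three quantitative bounds need nothing beyond direct integration and a single application of Gronwall's inequality.
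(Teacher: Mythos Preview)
Your proposal is correct and matches the paper's treatment: the paper does not give a proof of this lemma at all, simply stating that the assertions follow ``straightforwardly from \cite[Lemma 3.1]{qiu2017viscosity}'' once one notes that the diffusion coefficients vanish. Your write-up is precisely the elementary specialization of that stochastic lemma to the pathwise ODE setting, and the paper would regard each step (boundedness, Lipschitz estimate, Gronwall, pathwise uniqueness for the flow property) as routine.
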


%

The following regular properties of the value function $V$ are from \cite[Proposition 3.3]{qiu2017viscosity}.
\begin{prop}\label{prop-value-func}
Let $(\cA 1)$ hold.
\\[4pt]
(i) For each  $t\in[0,T]$ and $\xi\in L^0(\Omega,\sF_t;\bR^d)$, there exists $\bar{\theta}\in\cU$ such that
$$
E\left[ J(t,\xi;\bar{\theta})-V(t,\xi)\right]<\eps.
$$
(ii) For each $(\bar{\theta},x)\in\cU\times \bR^d$, $\left\{J(t,X_t^{0,x;\bar\theta};\bar{\theta})-V(t,X_t^{0,x;\bar\theta})\right\}_{t\in[0,T]}$ is a supermartingale, i.e.,  for any $0\leq t\leq \tilde{t}\leq T$,
\begin{align}
V(t,X_t^{0,x;\bar\theta})
\leq E_{\sF_t}V(\tilde{t},X_{\tilde{t}}^{0,x;\bar{\theta}}) + E_{\sF_t}\int_t^{\tilde{t}}f(s,X_s^{0,x;\bar{\theta}},\bar\theta_s)\,ds,\,\,\,\text{a.s.}\label{eq-vfunc-supM}
\end{align}
(iii) For each $(\bar{\theta},x)\in\cU\times \bR^d$, $\left\{V(s,X_s^{0,x;\bar{\theta}})\right\}_{s\in[0,T]}$ is a continuous process.
\\[3pt]
(iv) 
 There exists $L_V>0$ such that for any $\theta\in\cU$
$$
|V(t,x)-V(t,y)|+|J(t,x;\theta)-J(t,y;\theta)|\leq L_V|x-y|,\,\,\,\text{a.s.},\quad \forall\,x,y\in\bR^d,
$$
with $L_V$ depending only on $T$ and the uniform Lipschitz constants of the coefficients $\beta,\sigma,f$ and $G$ w.r.t. the spatial variable $x$.
\\[3pt]
(v) With probability 1, $V(t,x)$ and $J(t,x;\theta)$ for each $\theta\in\cU$ are continuous  on $[0,T]\times\bR^d$ and 
$$ \sup_{(t,x)\in[0,T]\times\bR^d}   \max\left\{|V(t,x)|,\,|J(t,x;\theta)| \right\}       \leq L(T+1) \quad\text{a.s.}$$
\end{prop}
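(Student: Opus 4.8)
The plan is to prove the two pathwise estimates (the spatial Lipschitz bound of (iv) and the uniform bound of (v)) by direct computation on the flow, then to extract the $\eps$-optimal control (i) and the sub-dynamic-programming inequality (ii) from the pasting structure of $\cU$, and finally to upgrade these into the time regularity that yields (iii) and the joint continuity in (v). Since the $H^{1,\infty}$-norm dominates both the sup-norm and the spatial Lipschitz constant, assumption $(\cA1)$ gives $|f(s,x,v)|\le L$, $|G(x)|\le L$ and uniform-in-$v$ Lipschitz constants bounded by $L$, a.s. The bound $|J(t,x;\theta)|\le L(T-t)+L\le L(T+1)$ then follows from \eqref{eq-cost-funct}, and passing to the essinf in \eqref{eq-value-func} gives the same bound for $V$. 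For (iv), comparing the flows from $x$ and $y$ under a fixed $\theta$ and invoking Lemma \ref{lem-SDE}(iv), namely $\max_{t\le l\le T}|X_l^{t,x;\theta}-X_l^{t,y;\theta}|\le K|x-y|$, I would feed this into the Lipschitz estimates of $f$ and $G$ to obtain $|J(t,x;\theta)-J(t,y;\theta)|\le L_V|x-y|$ with $L_V$ depending only on $K$ (hence on $L,T$) and the Lipschitz constants of the coefficients. As an essinf of a family of $L_V$-Lipschitz functions is again $L_V$-Lipschitz, $V(t,\cdot)$ inherits the bound; proving it first on a countable dense set and extending by uniform continuity makes it hold for all $x,y$ simultaneously a.s.

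The engine for (i)--(iii) is the stability of $\cU$ under pasting on $\sF_t$-sets: for $\theta_1,\theta_2\in\cU$ and $A:=\{J(t,\xi;\theta_1)\le J(t,\xi;\theta_2)\}\in\sF_t$, the control $\theta_1 1_A+\theta_2 1_{A^c}$ lies in $\cU$ and realizes $\min\{J(t,\xi;\theta_1),J(t,\xi;\theta_2)\}$, so the family $\{J(t,\xi;\theta):\theta\in\cU\}$ is downward directed. Hence its essinf is the a.s.-decreasing limit of $J(t,\xi;\theta_n)$ along some sequence, and monotone convergence (all terms bounded by $L(T+1)$) gives $E[J(t,\xi;\theta_n)]\downarrow E[V(t,\xi)]$, which is (i). For (ii), I would freeze a control $\bar\theta$ on $[t,\tilde t]$: by the flow identity (Lemma \ref{lem-SDE}(i)) and the cocycle property of the cost, any $\theta'$ agreeing with $\bar\theta$ on $[t,\tilde t]$ satisfies $J(t,X_t^{0,x;\bar\theta};\theta')=E_{\sF_t}[\int_t^{\tilde t}f\,ds+J(\tilde t,X_{\tilde t}^{0,x;\bar\theta};\theta')]$; taking the essinf over the tail of $\theta'$ and exchanging it with $E_{\sF_t}$ via the same directedness and conditional monotone convergence identifies the tail-essinf with $V(\tilde t,X_{\tilde t}^{0,x;\bar\theta})$, which yields \eqref{eq-vfunc-supM} because $V\le J$. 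Since $J(s,X_s^{0,x;\bar\theta};\bar\theta)+\int_0^s f\,dr$ is a martingale of the (Brownian) filtration, \eqref{eq-vfunc-supM} is precisely the supermartingale property asserted in (ii).

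For (iii) and the joint continuity in (v), I would exploit that the Brownian filtration carries only continuous martingales, so $J(s,X_s^{0,x;\bar\theta};\bar\theta)+\int_0^s f\,dr$ admits a continuous version in $s$, and hence so does $J(s,X_s^{0,x;\bar\theta};\bar\theta)$. Splitting $|V(s,X_s)-V(t,X_t)|\le L_V|X_s-X_t|+|V(s,X_t)-V(t,X_t)|$ and using the time-Lipschitz bound on the flow (Lemma \ref{lem-SDE}(iii)), the first term is $O(|s-t|)$; the genuinely temporal increment $|V(s,X_t)-V(t,X_t)|$ I would control through \eqref{eq-vfunc-supM}, whose forward inequality bounds $V(t,z)-V(s,z)$ by an $O(|s-t|)$ running cost plus an $O(|X_s-z|)$ spatial term, with (i) supplying the matching reverse bound via a near-optimal control. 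This delivers the continuity of $s\mapsto V(s,X_s^{0,x;\bar\theta})$ in (iii); specializing to deterministic constant controls and letting $x$ range over a countable dense set, the same estimate combined with (iv) yields the joint continuity of $(t,x)\mapsto V(t,x)$ in (v).

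I expect the main obstacle to be the time regularity underlying (iii) and (v). Whereas Lemma \ref{lem-SDE}(iv) supplies a clean pathwise Lipschitz estimate in space, controlling the temporal increment of $V$ forces one through the dynamic programming inequality and a delicate comparison of the conditioning $\sigma$-algebras $\sF_t$ and $\sF_{\tilde t}$. Underlying all of (i), (ii) and (iii) is the interchange of essinf with conditional expectation justified by the downward-directed/pasting structure of $\cU$; this is the step that must be argued most carefully, and it is also where the mere measurability of the coefficients in $\omega$, rather than any continuity, suffices.
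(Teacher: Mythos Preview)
The paper does not supply its own proof of this proposition: it is quoted verbatim as \cite[Proposition 3.3]{qiu2017viscosity}, with no argument given beyond the citation. Your proposal is therefore not comparable to anything in the present paper, but it is a faithful reconstruction of the standard argument one expects in the cited reference: pathwise flow estimates from Lemma~\ref{lem-SDE} for (iv) and the sup-bound in (v), the downward-directedness/pasting structure of $\cU$ for (i) and the one-sided dynamic programming inequality (ii), and then bootstrapping to time regularity for (iii) and joint continuity in (v). Your identification of the delicate point---the temporal increment of $V$ and the interchange of $\essinf$ with $E_{\sF_t}$---is accurate; in your sketch for (iii), be aware that \eqref{eq-vfunc-supM} controls $V(t,z)-E_{\sF_t}[V(s,X_s^{t,z;\bar\theta})]$ rather than $V(t,z)-V(s,z)$ directly, so one typically argues continuity of the composed process $s\mapsto V(s,X_s^{0,x;\bar\theta})$ via the super/submartingale structure on a Brownian filtration (hence continuous modifications) rather than by a pointwise two-time estimate at a frozen spatial point.
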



%



\section{Proof of Theorem \ref{thm-main}}

The proof   consists of two steps. In the first subsection, we prove that the value function is the maximal viscosity (sub)solution of the stochastic HJ  equation \eqref{SHJB}, which essentially yields a weak version of comparison principle. In the second subsection, the uniqueness is addressed on basis of the established comparison results through approximations.

Throughout this section, we define for any $\phi\in\mathscr C^1_{\sF}$ and $v\in U$,
\begin{align*}
\mathscr L^{v}\phi(t,x)=
 \mathfrak{d}_t \phi (t,x)      +\beta'(t,x,v)D\phi(t,x).
\end{align*}

\subsection{Maximal viscosity subsolution}
We first prove that the value function is the maximal viscosity (sub)solution of BSPDE \eqref{SHJB}. Such maximality is parallel to that of \cite[Theorem 5.2]{qiu2017viscosity}, but, as we want to achieve this without the additional strong regularity assumption (see $(\cA^*)$ in Remark \ref{rmk-unique}) required in \cite{qiu2017viscosity}, some new techniques are needed. The first one is based on smooth approximations.

Let
 \begin{equation}\label{bump-func}
 \rho(x)=
 \begin{cases}
 \tilde{c} \,  e^{\frac{1}{|x|^2-1}}&\quad \text{if } |x| < 1;\\
 0&\quad \text{otherwise};
 \end{cases}
 \quad \mbox{with}\quad \tilde{c}:=\left(     \int_{|x| < 1} e^{\frac{1}{x^2-1}}\,dx    \right)^{-1},
 \end{equation}
 and we define mollifier $\rho_l(x)=l^d\rho(lx)$, $x\in\bR^d$ for each $l\in\bN^+$. For $g=\beta^i(t,\cdot,v),\, f(t,\cdot,v),\, G(\cdot)$ $(1\leq i \leq d,\,1\leq j\leq m)$, take convolutions
 \begin{align*}
 g_l(x)=\int_{\bR^d} \rho_l(x-y)g(y)\,dy,\quad x\in\bR^d.
 \end{align*}
Then the coefficients $\beta_l$,  $f_l$ and $G_l$ satisfy assumption $(\cA 1)$ for each $l\in\bN^+$ and 
\begin{align}
\lim_{l\rightarrow \infty}\|G-G_l\|_{L^{\infty}(\Omega,\sF_T;L^{\infty}(\bR^d))} +\sup_{v\in\cU} \left( \|f_l-f\|_{\cS^{\infty}(L^{\infty}(\bR^d))}+\|\beta_l-\beta\|_{\cS^{\infty}(L^{\infty}(\bR^d))}\right)=0. \label{appr-l}
\end{align}
Moreover, since $G_l\in  L^{\infty}(\Omega,\sF_T;C^{k}(\bR^d))$, $\beta_l^i(\cdot,\cdot,\theta),f_l(\cdot,\cdot,\theta)\in \cS^{\infty} (C^{k}(\bR^d))$, $i=1,\dots,d$ for any $(k,\theta)\in\bN^+\times\cU$, by the classical solution theory for BSPDEs in \cite[Lemma 5.1, Theorems 4.6, 5.1\&5.2]{Tang_05} we have

\begin{prop}\label{prop-tang}
For each $(l,\theta)\in\bN^+\times \cU$, there exists a unique solution $u_l$ in $\mathscr C^1_{\sF} \cap \cS^{\infty}(C^2(\bR^d))$ to the following BSPDE:
 \begin{equation}\label{SHJB-dBSPDE-l}
  \left\{\begin{array}{l}
  \begin{split}
  -du_l(t,x)=\,&\displaystyle 
    \left[   \beta_l'(t,x,\theta_t)Du_l(t,x)
          +f_l(t,x,\theta_t)
               \right] \,dt -\psi_l(t,x)\, dW_{t}, \quad
                     (t,x)\in Q;\\
    u_l(T,x)=\, &G_l(x), \quad x\in\bR^d,
    \end{split}
  \end{array}\right.
\end{equation}
with $\psi_l=  \mathfrak{d}_{\omega} u_l$, and for each $x\in\bR^d$ and $0\leq t\leq s \leq T$, the random processes
$$
X_s^{t,x;\theta,l},\quad Y^{t,x;\theta,l}_s:=u_l(s,X_s^{t,x;\theta,l}) \quad \text{and} \quad Z^{t,x;\theta,l}_s:=\psi_l(s,X^{t,x;\theta,l}_s)
$$
satisfy the following forward-backward SDEs:
\begin{equation}\label{FBSDE-l}
\begin{cases}
X_s^{t,x;\theta}=x+\int_t^s \beta_l(r,X_r^{t,x;\theta},\theta_r)\,dr,\quad 0\leq t\leq s\leq T;\\
Y_s^{t,x;\theta}=G_l(X_T^{t,x;\theta}) +\int_s^T f_l(r,X_r^{t,x;\theta},\theta_r)\,dr-\int_s^T Z_r^{t,x;\theta}\,dW_r,\   0\leq t\leq s\leq T.
\end{cases}
\end{equation}
\end{prop}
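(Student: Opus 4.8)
The plan is to prove the two assertions of the proposition separately: first the wellposedness and regularity of $u_l$ as a classical solution of the \emph{linear} first-order BSPDE \eqref{SHJB-dBSPDE-l} (the control $\theta$ being frozen), and then the forward--backward representation \eqref{FBSDE-l}, which I would obtain by composing the random field $u_l$ with the forward characteristic $X^{t,x;\theta,l}$ through a stochastic chain rule.

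\emph{Wellposedness and regularity.} Since the mollified coefficients satisfy, for every $(k,\theta)\in\bN^+\times\cU$, $G_l\in L^\infty(\Omega,\sF_T;C^k(\bR^d))$ and $\beta_l^i(\cdot,\cdot,\theta),\,f_l(\cdot,\cdot,\theta)\in\cS^\infty(C^k(\bR^d))$ (as recorded just before the statement), with the $\sF_t$-predictability inherited from $(\cA1)$, the driver of \eqref{SHJB-dBSPDE-l} is a linear, globally Lipschitz first-order operator with bounded smooth coefficients. I would verify that these data meet the structural hypotheses of the classical solution theory for BSPDEs, namely \cite[Lemma 5.1, Theorems 4.6, 5.1 \& 5.2]{Tang_05}, which then yields a unique pair $(u_l,\psi_l)$ with $u_l\in\cS^\infty(C^2(\bR^d))$ and $\psi_l\in\cL^2(C(\bR^d))$. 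Membership $u_l\in\mathscr C^1_{\sF}$ is then read off the equation in the form \eqref{SDE-u}: matching \eqref{SHJB-dBSPDE-l} against Definition \ref{defn-testfunc} identifies $\mathfrak{d}_s u_l=-\big(\beta_l'(s,\cdot,\theta_s)Du_l+f_l(s,\cdot,\theta_s)\big)$ and $\mathfrak{d}_\omega u_l=\psi_l$, both lying in $\cL^2(C(\bR^d))$ by the regularity just obtained; uniqueness of the Doob--Meyer type decomposition then forces $\psi_l=\mathfrak{d}_\omega u_l$.

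\emph{Forward--backward representation.} Fix $(t,x)$. Because $\beta_l(\cdot,\cdot,\theta)$ is bounded and Lipschitz in space, the forward equation in \eqref{FBSDE-l} (an ODE for a.e.\ $\omega$, since $\sigma\equiv0$) admits a unique $\sF_s$-adapted solution $X_s:=X^{t,x;\theta,l}_s$, with the flow and growth estimates furnished by the analogue of Lemma \ref{lem-SDE} for $\beta_l$. I would then apply the It\^o--Wentzell (It\^o--Kunita) formula to $Y_s:=u_l(s,X_s)$. Writing the forward It\^o differential of the random field as $du_l(s,y)=\mathfrak{d}_s u_l(s,y)\,ds+\psi_l(s,y)\,dW_s$ and using $dX_s=\beta_l(s,X_s,\theta_s)\,ds$, the crucial simplification is that $X$ has \emph{no} martingale part ($\sigma\equiv0$), so every quadratic-variation and cross-variation term — in particular all contributions of $D^2u_l$ and $D\psi_l$ — vanishes, and the formula collapses to
\begin{align*}
dY_s=\Big(\mathfrak{d}_s u_l(s,X_s)+\beta_l'(s,X_s,\theta_s)Du_l(s,X_s)\Big)\,ds+\psi_l(s,X_s)\,dW_s.
\end{align*}
Substituting $\mathfrak{d}_s u_l=-(\beta_l'Du_l+f_l)$ cancels the two $\beta_l'Du_l$ terms, leaving $dY_s=-f_l(s,X_s,\theta_s)\,ds+Z_s\,dW_s$ with $Z_s:=\psi_l(s,X_s)$. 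Integrating from $s$ to $T$ and using $Y_T=u_l(T,X_T)=G_l(X_T)$ produces exactly the backward equation of \eqref{FBSDE-l}.

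\emph{Main obstacle.} The delicate point is the rigorous application of the It\^o--Wentzell formula: one must justify the a.s.\ simultaneous-in-$s$ composition of the continuous semimartingale random field $u_l(s,\cdot)$ with the characteristic $X_s$, and confirm that $\psi_l(s,X_s)$ is the correct martingale integrand. Here the degeneracy $\sigma\equiv0$ is an advantage rather than a difficulty, since it removes any need for second-order spatial regularity and for regularity of $\psi_l$; thus $u_l\in\cS^\infty(C^2(\bR^d))$ with $Du_l\in\cL^2(C(\bR^d))$ is more than sufficient, and the chain rule reduces to the ordinary pathwise one in the $ds$-terms plus the unaltered stochastic integral. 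The only remaining care is to check that the mollified data fall within the exact function-space framework of \cite{Tang_05}, so that the quoted existence, uniqueness and regularity statements apply verbatim.
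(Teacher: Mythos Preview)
Your proposal is correct and follows exactly the route the paper takes: the paper does not give an independent proof but simply invokes \cite[Lemma 5.1, Theorems 4.6, 5.1 \& 5.2]{Tang_05} for both the wellposedness/regularity of the linear BSPDE and the FBSDE representation, and your argument (classical BSPDE theory for the mollified coefficients, then the It\^o--Kunita chain rule along the characteristic with vanishing diffusion) is precisely the content of those cited results specialized to the present degenerate setting.
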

\bigskip

We now introduce another two space-invariant stochastic processes. Put
\begin{align*}
&
	\delta G_l:=\esssup_{x\in\bR^d} \left|G_l(x)-G(x) \right|,
\\
& 
	\delta f_t^{l,\theta} :=\sup_{x\in\bR^d}
		\left|f_l(t,x,\theta_t)-f(t,x,\theta_t)\right|,\quad \text{for } t\in  [0,T],
\\
&
 	\delta \beta^{l,\theta}_t :=\sup_{x\in\bR^d}
		\left|\beta_l( t,x,\theta_t)-\beta(t,x,\theta_t)\right|,\quad \text{for } t\in  [0,T].
\end{align*}
Let $(Y^l,Z^l)\in \cS^2(\bR)\times\cL^2(\bR^m)$ be the solution to BSDE:
\begin{align*}
Y^l_t=\delta G_l+\int_t^T (\delta f_s^{l,\theta}+ K\delta \beta_s^{l,\theta})\,ds-\int_t^T Z_s^{l}\,dW_s
\end{align*}
where $K=L_V$ is the constant from (iv) of Proposition \ref{prop-value-func}. Recalling relation \eqref{appr-l}, we have by the theory of BSDE that
\begin{align}
\|Y^l\|_{\cS^{2}(\bR)} \leq C \delta_l, \quad\text{with }\delta_l=\|\delta G_l\|_{L^{2}(\Omega,\sF_T)} +  \|\delta f^{l,\theta}+K\delta \beta^{l,\theta}\|_{\cL^{2}(\bR)}\rightarrow 0,
\label{appr-Y}
\end{align}
where the constant $C$ is independent of $l$.

\begin{lem}\label{lem-approx-j}
For each $\theta\in\cU$, setting $\hat{J}_l(t,x)=u_l(t,x)+Y_t^l$, we have $\hat{J}_l(t,x)\in \mathscr C^1_{\sF}$  with
$$
\lim_{l\rightarrow \infty} \| \hat{J}_l(t,x)-J(t,x;\theta) \|_{\cS^{2}(C(\bR^d))} =0
$$
and 
\begin{align}
\text{ess}\liminf_{(s,x)\rightarrow (t^+,y)}  E_{\sF_t}\left[ -\mathfrak{d}_{s}\hat{J}_l(s,x)-\bH(s,x,D\hat J_l(s,x) ) \right] 
		\geq 0, \ \text{a.s.}
		\quad \forall (t,y)\in[0,T)\times\bR^d. \label{relation-sup}
\end{align}
\end{lem}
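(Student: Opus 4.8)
The plan is to exploit the explicit structure of the two approximating ingredients: $u_l$ solves the \emph{linear} BSPDE \eqref{SHJB-dBSPDE-l} with frozen control $\theta$, while the space-independent correction $Y^l$ has its driver $\delta f^{l,\theta}+K\delta\beta^{l,\theta}$ chosen precisely to absorb the errors produced by mollification. First I would read off the It\^o decompositions to get membership in $\mathscr C^1_{\sF}$. From \eqref{SHJB-dBSPDE-l} and Definition \ref{defn-testfunc}, $\mathfrak{d}_su_l(s,x)=-\beta_l'(s,x,\theta_s)Du_l(s,x)-f_l(s,x,\theta_s)$ and $\mathfrak{d}_\omega u_l=\psi_l$, all in $\cL^2(C(\bR^d))$ by Proposition \ref{prop-tang}; from the BSDE for $Y^l$, $\mathfrak{d}_sY^l=-(\delta f^{l,\theta}_s+K\delta\beta^{l,\theta}_s)$ and $\mathfrak{d}_\omega Y^l=Z^l$. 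Since $Y^l$ is constant in $x$, adding the decompositions gives $\hat J_l\in\mathscr C^1_{\sF}$ with $D\hat J_l=Du_l$ and
$$-\mathfrak{d}_s\hat J_l(s,x)=\beta_l'(s,x,\theta_s)Du_l(s,x)+f_l(s,x,\theta_s)+\delta f^{l,\theta}_s+K\delta\beta^{l,\theta}_s.$$

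The crucial preliminary is a uniform gradient bound. Taking $E_{\sF_t}$ in the backward equation of \eqref{FBSDE-l} at $s=t$ and using $X_t^{t,x;\theta,l}=x$ identifies $u_l(t,x)=E_{\sF_t}\big[G_l(X_T^{t,x;\theta,l})+\int_t^Tf_l(r,X_r^{t,x;\theta,l},\theta_r)\,dr\big]$, i.e. $u_l$ is the cost functional of the \emph{mollified} control problem. Because convolution with $\rho_l$ does not enlarge spatial Lipschitz constants, the data $\beta_l,f_l,G_l$ retain the bound $L$, so Proposition \ref{prop-value-func}(iv), applied to the mollified data (its constant $L_V$ depending only on $T$ and those Lipschitz constants), yields $|Du_l(s,x)|\le L_V=K$ a.s., uniformly in $(s,x)$.

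With this in hand, \eqref{relation-sup} is a pointwise estimate. As the coefficients are continuous in $v$, $\bH(s,x,p)=\essinf_{v\in U}\{\beta'(s,x,v)p+f(s,x,v)\}$ is a genuine infimum, so inserting the admissible value $v=\theta_s$ gives $\bH(s,x,Du_l)\le\beta'(s,x,\theta_s)Du_l+f(s,x,\theta_s)$ a.s. Subtracting, and using $|(f_l-f)(s,x,\theta_s)|\le\delta f^{l,\theta}_s$, $|(\beta_l-\beta)(s,x,\theta_s)|\le\delta\beta^{l,\theta}_s$ together with $|Du_l|\le K$, I would obtain
$$-\mathfrak{d}_s\hat J_l(s,x)-\bH(s,x,D\hat J_l(s,x))\ge\big[(f_l-f)(s,x,\theta_s)+\delta f^{l,\theta}_s\big]+\big[(\beta_l-\beta)'(s,x,\theta_s)Du_l+K\delta\beta^{l,\theta}_s\big]\ge0$$
a.s. for every $(s,x)$; applying $E_{\sF_t}$ and then $\mathrm{ess}\liminf_{(s,x)\to(t^+,y)}$ preserves the sign, giving \eqref{relation-sup}.

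For the convergence I would write $\hat J_l-J=(u_l-J)+Y^l$, where $\|Y^l\|_{\cS^2(\bR)}\le C\delta_l\to0$ by \eqref{appr-Y}. Comparing the two cost functionals, a Gronwall estimate on the ODE difference $X^{t,x;\theta,l}-X^{t,x;\theta}$ (using the Lipschitz bound on $\beta_l$) controls it by $\sup_v\|\beta_l-\beta\|_\infty$; then the Lipschitz continuity of $G,f$ in $x$, the uniform errors $\delta G_l,\delta f^{l,\theta}$, and \eqref{appr-l} force $\sup_x|u_l-J|\to0$ uniformly in $(t,\omega)$, so that taking $\sup_t$ and the $L^2(\Omega)$-norm yields $\cS^2(C(\bR^d))$ convergence. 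I expect the main obstacle to be the uniform gradient bound $|Du_l|\le K$: without it the mollification error $(\beta_l-\beta)'Du_l$ could not be dominated by the correction $K\delta\beta^{l,\theta}_s$ and the sign in \eqref{relation-sup} would break; this is exactly why $Y^l$ is built with $K=L_V$ and why one must recognize $u_l$ as a mollified cost functional inheriting the Lipschitz constant $L$.
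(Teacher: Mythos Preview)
Your proposal is correct and follows essentially the same route as the paper: recognizing $u_l$ as the mollified cost functional to inherit the spatial Lipschitz bound $|Du_l|\le L_V=K$, using this to show $-\mathfrak{d}_s\hat J_l-\bH(s,x,D\hat J_l)\ge 0$ pointwise by inserting $v=\theta_s$ in the Hamiltonian, and proving convergence via a Gronwall estimate on $X^{t,x;\theta,l}-X^{t,x;\theta}$. The paper organizes the steps in a slightly different order (convergence first, then the inequality) and writes the pointwise computation as $-\mathscr L^{\theta_s}\hat J_l-f\ge 0$, but the ingredients and the logic are the same.
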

\begin{proof}
It follows obviously that  $\hat{J}_l(t,x)\in \mathscr C^1_{\sF}$ from Proposition \ref{prop-tang}.

Using Gronwall's inequality through standard computations gives for any $(s,x)\in[0,T]\times \bR^d$,
\begin{align*}
 \sup_{s\leq t\leq T} \left|  X^{s,x;\theta,l}_t-X^{s,x;\theta}_t   \right|
 &\leq C \int_s^T  \left| \beta_l\left( X^{s,x;\theta,l}_t,\theta_t\right)-\beta\left(t,X^{s,x;\theta,l}_t,\theta_t\right) \right|\,dt
	\\
&\leq
C \delta_l,\quad \text{a.s.}
\end{align*}
with constant $C$ depending only on $L$ and $T$. Then
\begin{align*}
&\left| u_l(s,x)-J(s,x;\theta)\right|
\\
&	
	\leq
		E_{\sF_s}\bigg[ 
		\int_s^T\Big( \delta f^{l,\theta}_t +\Big| f\left(t,X^{s,x;\theta,l}_t,\theta_t\right) -f\left(t,X^{s,x;\theta}_t,\theta_t\right)\Big| \Big)\,dt
\\
&\quad\quad\quad
		+\delta G_l +\Big| G\left(X^{s,x;\theta,l}_T\right) -G\left(X^{s,x;\theta}_T \right)\Big|
		\bigg] 
\\
&
	\leq |Y^{l}_s|
		 +2L(T+1) E_{\sF_s}\left[\sup_{s\leq t\leq T}   \left|  X^{s,x;\theta,l}_t-X^{s,x;\theta}_t   \right|    \right]
\\
&
	\leq C \delta_l, \quad\text{a.s.}
\end{align*}
with the constant $C$ independent of $l,s$ and $x$, which together with \eqref{appr-Y} implies
$$\lim_{l\rightarrow \infty} \| \hat{J}_l(t,x)-J(t,x;\theta) \|_{\cS^{2}(C(\bR^d))} =0.$$

Notice that  the coefficients $\beta_l$,  $f_l$ and $G_l$ satisfy assumption $(\cA 1)$ with the identical constant $L$. In view of Proposition \eqref{appr-Y} and the BSDE for $Y^l$, the random field $u_l$ satisfies (iv) of Proposition \ref{prop-value-func} with the same Lipschitz constant $L_V$, and we have
\begin{align*}
-\mathfrak{d}_{t}  u_l  
	&=
	     \beta_l'Du_l
	  +f_l\\
	-\mathfrak{d}_{t}  Y^l 
	  &=\delta f^{l,\theta}+ L_V  \delta\beta^{l,\theta}
\end{align*}
and thus
\begin{align*}
-\mathscr L^{\theta_s} \hat{J}_l-f
&=-\mathfrak{d}_{t}  \hat{J}_l
	-
	     \beta_l'D\hat{J}_l
	  -f_l
	  - \left(\beta-\beta_l\right)'  D\hat{J}_l      
	  -f+f_l
	  \\
&=-\mathfrak{d}_{t}  u_l +\delta f^{l,\theta}+ L_V  \delta\beta^{l,\theta} 
	-
	     \beta_l'Du_l
	  -f_l
	  - \left(\beta-\beta_l\right)'  Du_l      
	  -f+f_l
	  \\
&= \delta f^{l,\theta}+ L_V  \delta\beta^{l,\theta} 
	  - \left(\beta-\beta_l\right)'  Du_l      
	  -f+f_l	  
	  \\
	  &\geq 0,	  
\end{align*}
where we omitted the inputs for each involved function for the sake of convenience.
Therefore, it holds that
\begin{align*}
\text{ess}\liminf_{(s,x)\rightarrow (t^+,y)}  E_{\sF_t}\left[ -\mathfrak{d}_{s}\hat{J}_l(s,x)-\bH(s,x,D\hat J_l(s,x) ) \right] 
		\geq 0, \ \text{a.s.}
		\quad \forall (t,y)\in[0,T)\times\bR^d.
\end{align*}

\end{proof}


Recalling the compactly-supported smooth (bump) function $\rho(x)$ defined in \eqref{bump-func}, set
\begin{align*}
h(x)=\int_{\bR^d} 1_{\{|y|>1 \}} \left(  |y|-1\right) \rho(x-y)\,dy,\quad x\in\bR^d. 
\end{align*}
%
Then the function $h(x)$ is convex and continuously differentiable with $h({0})=0$, $h(x)>0$ whenever $|x|>0$, and 
\begin{align}
h(x)> |x|-2,\quad    |Dh(x)|\leq 1 \text{ for any } x\in\bR^d.\label{h-linear-growth}
\end{align}

\begin{thm}\label{thm-max}
Let $(\cA1)$ hold.   Let $u\in \cS^{2}(C(\bR^d))$ be a viscosity subsolution  of the stochastic HJB equation \eqref{SHJB}. It holds a.s. that $u(t,x)\leq V(t,x)$ for any $(t,x)\in[0,T]\times\bR^d$, where $V$ is the value function defined by \eqref{eq-value-func}.
\end{thm}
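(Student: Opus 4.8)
The plan is to obtain the pointwise bound $u\le V$ from the representation $V=\essinf_{\theta\in\cU}J(\cdot,\cdot;\theta)$ by proving that $u(t,x)\le J(t,x;\theta)$ a.s. for each \emph{fixed} $\theta\in\cU$; since $u$ and every $J(\cdot,\cdot;\theta)$ are continuous in $(t,x)$ (Proposition \ref{prop-value-func}(v) and $u\in\cS^2(C(\bR^d))$), taking the essential infimum over $\theta$ then yields $u\le V$ a.s. for all $(t,x)$. To bound $u$ by $J(\cdot,\cdot;\theta)$ I replace the latter by the smooth fields $\hat J_l$ of Lemma \ref{lem-approx-j} and, for $\eps>0$ and $\gamma>L\eps$ (say $\gamma=2L\eps$), introduce the penalized test field $\phi:=\hat J_l+\eps h+\gamma(T-t)$, where $h$ is the coercive function of \eqref{h-linear-growth}. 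Because $\hat J_l\in\mathscr C^1_{\sF}$ and the perturbation $\eps h+\gamma(T-t)$ is a deterministic $C^1$ field with $\mathfrak{d}_\omega(\eps h+\gamma(T-t))=0$ and $\mathfrak{d}_t(\eps h+\gamma(T-t))=-\gamma$, we have $\phi\in\mathscr C^1_{\sF}$. The growth $h(x)>|x|-2$ guarantees that $\esssup_x\{u(t,x)-\phi(t,x)\}$ is attained at a finite random point for each $t$, while the terminal inequalities $u(T,\cdot)\le G$ and $\hat J_l(T,\cdot)=G_l+\delta G_l\ge G$ give $(u-\phi)(T,\cdot)\le-\eps h\le0$.

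The second step is to record that $\phi$ is a \emph{strict} supersolution. Since $\bH(t,x,\cdot)$ is Lipschitz in $p$ with constant $L$ (from $(\cA1)$ and the compactness of $U$) and $|Dh|\le1$ by \eqref{h-linear-growth}, we have $\bH(s,x,D\phi)=\bH(s,x,D\hat J_l+\eps Dh)\le\bH(s,x,D\hat J_l)+L\eps$, whereas $-\mathfrak{d}_s\phi=-\mathfrak{d}_s\hat J_l+\gamma$. Combined with the supersolution estimate \eqref{relation-sup} for $\hat J_l$, this gives, in the essential-liminf sense of Definition \ref{defn-viscosity},
\[
\text{ess}\liminf_{(s,x)\to(t^+,y)}E_{\sF_t}\{-\mathfrak{d}_s\phi(s,x)-\bH(s,x,D\phi(s,x))\}\ \ge\ \gamma-L\eps\ >\ 0,\quad\text{a.s.}
\]

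The heart of the proof is then a contradiction. Suppose $\bP\big(\sup_{(t,x)}(u-\phi)(t,x)>0\big)>0$, and set $m_t:=\esssup_x(u-\phi)(t,x)$, a continuous $\sF_t$-adapted process with $m_T\le0$. I must produce a stopping time $\tau$, an event $\Omega_\tau\in\sF_\tau$ with $\bP(\Omega_\tau)>0$, and an $\sF_\tau$-measurable maximizer $\xi$ at which $\phi$ touches $u$ from above in the averaged sense of $\underline{\cG}u(\tau,\xi;\Omega_\tau)$, namely $m_\tau=0$ on $\Omega_\tau$ together with $\esssup_{\bar\tau\in\cT^\tau}E_{\sF_\tau}[m_{\bar\tau\wedge\hat\tau}]\le0$ for a suitable $\hat\tau\in\cT^\tau_+$ (so that $\inf_{y\in B_\delta(\xi)}(\phi-u)(\bar\tau\wedge\hat\tau,y)$ has nonnegative conditional mean). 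A measurable selection then furnishes $\xi$ and $\phi\in\underline{\cG}u(\tau,\xi;\Omega_\tau)$, whereupon the viscosity subsolution inequality \eqref{defn-vis-sub} forces the essliminf above to be $\le0$, contradicting the strict bound $\gamma-L\eps>0$ just established. Hence $u\le\phi$ a.s. for all $(t,x)$.

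I expect the construction of $(\tau,\xi,\Omega_\tau)$ under the nonanticipativity constraint to be the main obstacle: the naive maximizing time ``right endpoint of the last positive excursion of $m$'' is not a stopping time, so the classical maximum-point selection of deterministic viscosity theory is unavailable. The device that removes this difficulty is precisely the essential-infimum-over-stopping-times built into $\underline{\cG}u$: instead of a genuine maximum one works with the Snell-type envelope $\esssup_{\bar\tau\in\cT^\tau}E_{\sF_\tau}[m_{\bar\tau\wedge\hat\tau}]$ and its supermartingale/optional-sampling structure to extract, on the event of a positive excursion, a stopping time $\tau$ with $m_\tau=0$ while the envelope stays nonpositive — exactly the averaged local maximum encoded by $\underline{\cG}u$. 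Once $u\le\hat J_l+\eps h+\gamma(T-t)$ is known, I fix $\eps$ and let $l\to\infty$, using $\|\hat J_l-J(\cdot,\cdot;\theta)\|_{\cS^2(C(\bR^d))}\to0$ from Lemma \ref{lem-approx-j} to pass to $u\le J(\cdot,\cdot;\theta)+\eps h+\gamma(T-t)$, and then send $\eps\downarrow0$ (so $\gamma=2L\eps\downarrow0$) to obtain $u(t,x)\le J(t,x;\theta)$ a.s. for all $(t,x)$. Taking the essential infimum over $\theta\in\cU$ and invoking $V=\essinf_\theta J$, together with Proposition \ref{prop-value-func}(i),(v), completes the proof that $u\le V$.
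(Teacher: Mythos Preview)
Your overall architecture matches the paper's: reduce to bounding $u$ by $\hat J_l$ (Lemma~\ref{lem-approx-j}), introduce the coercive penalty $\eps h$, and invoke optimal stopping to manufacture a touching point compatible with $\underline{\cG}u$. However, there is a genuine gap in the construction of the touching point.

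The Snell envelope does not deliver what you claim. If $Z_s=\esssup_{\bar\tau\in\cT^s}E_{\sF_s}[m_{\bar\tau}]$ and $\tau^*=\inf\{s\ge t_0:\, m_s=Z_s\}$ is the optimal stopping time, then you obtain $m_{\tau^*}=Z_{\tau^*}$ together with $\esssup_{\bar\tau}E_{\sF_{\tau^*}}[m_{\bar\tau}]\le Z_{\tau^*}=m_{\tau^*}$; you do \emph{not} obtain $m_{\tau^*}=0$. Indeed, $E_{\sF_{t_0}}[m_{\tau^*}]=Z_{t_0}\ge m_{t_0}>0$ on the assumed excursion event, so $m_{\tau^*}>0$ there with positive probability. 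Consequently your fixed test field $\phi=\hat J_l+\eps h+\gamma(T-t)$ fails the touching condition $(\phi-u)(\tau^*,\xi)=0$ required by Definition~\ref{defn-viscosity}, and the viscosity-subsolution inequality cannot be invoked. Conversely, the ``last zero of $m$'' is not a stopping time, and the first hitting time of $\{m\le 0\}$ gives $m_\tau=0$ but no control of the future envelope.

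The remedy, and this is exactly what the paper does, is to augment the test function by the martingale $s\mapsto E_{\sF_s}[Y_\tau]$, where $Y$ is a time-penalized version of $m^+$. This addition lies in $\mathscr C^1_{\sF}$ with $\mathfrak d_t\big(E_{\sF_s}[Y_\tau]\big)=0$, so it does not disturb your strict-supersolution inequality, yet it shifts $\phi$ so that $(\phi-u)(\tau,\xi_\tau)=0$ on $\Omega_\tau$, while the local-minimum condition in $\underline{\cG}u$ is inherited directly from $Y_\tau=Z_\tau$ and the supermartingale property of $Z$. The paper further takes the time penalty to be $\frac{\alpha(s-t)}{2(T-t)}$ with $\alpha$ the (random) size of the excursion, which forces $Y_T<Y_t$ and hence $\bP(\tau<T)>0$; your deterministic $\gamma(T-t)$ would serve the same purpose once the martingale shift is in place. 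With this correction, the remainder of your argument (contradiction via the strict gap, then $l\to\infty$, then $\eps\downarrow 0$, then $\essinf_\theta$) goes through as you describe.
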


\begin{proof}
We argue by contradiction.  Suppose that with a positive probability, $u(t,\bar x)>V(t,\bar x)$  at some point $(t,\bar x)\in [0,T)\times \bR^d$.  In view of the approximating relations between $V(t,x)$, $J(t,x;\theta)$ and $\hat{J}_l(t,x)$ in Proposition \ref{prop-value-func} and Lemma \ref{lem-approx-j}, we have some $(l,\theta)\in\bN^+\times \cU$ such that $u(t,\bar x)>\hat{J}_l(t,\bar x)$ with a positive probability; more precisely, there exists $\kappa>0$ such that $\bP( \overline\Omega_t)>0$ with $\overline\Omega_t:=\{u(t,\bar x)-\hat{J}_l(t,\bar x) >\kappa  \}$. Furthermore,  for any $\eps \in (0,1)$, there exists $\xi_t\in L^0(\overline\Omega_t,\sF_t;\bR^d)$ such that
$$
\alpha:=u(t,\xi_t)-\hat J_l(t,\xi_t)-\eps h(\xi_t-\bar x )=\max_{x\in\bR^d} \{  u(t,x)-\hat J_l(t,x) -\eps h(x-\bar x )\}\geq \kappa\text{ for almost all }\omega\in\overline\Omega_t,
$$ 
where the existence and the measurablity  of $\xi_t$ fellow from the measurable selection, the linear growth of function $h(x)$ (see \eqref{h-linear-growth}) and the fact  that $u,\hat J_l\in \cS^{2}(C(\bR^d))$. Note that $\kappa$ and $\overline\Omega_t$ are independent of $\eps$. W.l.o.g, we take $\overline\Omega_t=\Omega$ in what follows.

For each $s\in(t,T]$, choose an $\sF_s$-measurable variable $\xi_s$ such that  
\begin{align}
\left( u(s,\xi_s)-\hat J_l(s,\xi_s) -\eps h( \xi_s-\bar x )\right)^+=\max_{x\in\bR^d}   \left(u(s,x)-\hat J_l(s,x)-\eps h(x-\bar x )\right)^+ . \label{eq-maxima}
\end{align}  
Set
\begin{align*}
Y_s
&=
	(u(s,\xi_s) -\hat J_l(s,\xi_s)-\eps h(\xi_s-\bar x ))^++\frac{\alpha (s-t)}{2(T-t)};\\
Z_s
&= 
	\esssup_{\tau\in\cT^s} E_{\sF_s} [Y_{\tau}],
\end{align*}
where we recall that  $\mathcal{T}^s$ denotes the set of stopping times valued in $[s,T]$. As $u,\hat J_l\in \cS^2(C(\bR^d))$, it follows obviously the time-continuity of  
$$
	\max_{x\in\bR^d}   \left(u(s,x)-\hat J_l(s,x)-\eps h(x-\bar x )\right)^+
$$ 
and thus that of
$\left( u(s,\xi_s)-\hat J_l(s,\xi_s) -\eps h(\xi_s-\bar x )\right)^+$. Therefore, the process $(Y_s)_{t\leq s \leq T}$ has continuous trajectories. 
Define $\tau=\inf\{s\geq t:\, Y_s=Z_s\}$. In view of the optimal stopping theory, observe that
$$
E_{\sF_t}Y_T=\frac{\alpha}{2}
	<\alpha=Y_t\leq Z_t=E_{\sF_t}Y_{\tau} =E_{\sF_t}Z_{\tau}.
$$
It follows that $\bP(\tau<T)>0$. As 
$$
	(u(\tau,\xi_{\tau}) -\hat J_l(\tau,\xi_{\tau})-\eps h(\xi_{\tau}-\bar x))^++\frac{\alpha (\tau-t)}{2(T-t)}
=Z_{\tau}\geq E_{\sF_{\tau}}[Y_T]= \frac{\alpha}{2},
$$
we have 
\begin{align}
\bP((u(\tau,\xi_{\tau}) -\hat J_l(\tau,\xi_{\tau})-\eps h(\xi_{\tau}-\bar x))^+>0)>0. \label{relation-tau}
\end{align}
 Define 
$$\hat\tau=\inf\{s\geq\tau:\, (u(s,\xi_{s}) -\hat J_l(s,\xi_{s})-\eps h(\xi_{s}-\bar x))^+\leq 0\}.$$ Obviously, $\tau\leq\hat\tau\leq T$.
 Put $\Omega_{\tau}=\{\tau<\hat\tau\}$. Then $\Omega_{\tau}\in \sF_{\tau}$ and in view of relation \eqref{relation-tau}, and the definition of $\hat\tau$, we have $\bP(\Omega_{\tau})>0$.

Set 
$$\phi(s,x)=\hat J_l(s,x)+\eps h(x-\bar x) -\frac{\alpha (s-t)}{2(T-t)}+E_{\sF_s}Y_{\tau}
.$$
 Then $\phi\in\mathscr{C}^1_{\sF}$ since $\hat J_l \in \mathscr{C}^1_{\sF}$. For each $\bar\tau\in\cT^{\tau}$, \footnote{Recall that $\cT^{\tau}$ denotes the set of stopping times $\zeta$ satifying $\tau\leq \zeta\leq T$ as defined in Section 2.2.} we have for almost all $\omega\in\Omega_{\tau}$, 
\begin{align*}
\left( \phi-u\right)(\tau,\xi_{\tau})
=0=Y_{\tau}-Z_{\tau}
\leq 
Y_{\tau}- E_{\sF_{\tau}}\left[Y_{\bar\tau\wedge \hat{\tau}} \right] 
= E_{\sF_{\tau}} \left[ \inf_{y\in\bR^d} (\phi-u)(\bar\tau\wedge\hat\tau,y)   \right],
\end{align*}
which together with the arbitrariness of $\bar\tau$ implies that $\phi\in \underline{\cG} u(\tau,\xi_{\tau};\Omega_{\tau})$. As $u$ is a viscosity subsolution, by Lemma \ref{lem-approx-j} it holds that for almost all $\omega\in\Omega_{\tau}$, 
\begin{align*}
0
&\geq
	 \text{ess}\liminf_{(s,x)\rightarrow (\tau^+,\xi_{\tau})}
	E_{\sF_{\tau}} \left\{ -\mathfrak{d}_{s}\phi(s,x)-\bH(s,x,D\phi(s,x) ) \right\}
\\
&=
	\frac{\alpha}{2(T-t)}
\\
&\quad\quad
	+ \text{ess}\liminf_{(s,x)\rightarrow (\tau^+,\xi_{\tau})}
	E_{\sF_{\tau}} \left\{  
	-\mathfrak{d}_{s}\hat J_l(s,x)-\bH(s,x,D \hat J_l(s,x) +\eps Dh(x-\bar x)) \right\}
\\
&\geq
	\frac{\kappa}{2(T-t)} + \text{ess}\liminf_{(s,x)\rightarrow (\tau^+,\xi_{\tau})}
	E_{\sF_{\tau}} \left\{  
	-\mathfrak{d}_{s}\hat J_l(s,x)-\bH(s,x,D \hat J_l(s,x)  ) \right\}  \\
	&
	\quad\quad\quad - \eps E_{\sF_{\tau}}\left[   \sup_{(s,x,v)\in [0,T]\times \bR^d \times v}   \left|\beta_l(s,x,v)\right| \cdot |Dh(x-\bar x)|  \right]	\\
&\geq \frac{\kappa}{2(T-t)}  -\eps \, L.
\end{align*}
This is an obvious contradiction as $\eps$ is sufficiently small. 
\end{proof}
\begin{rmk}
Compared with the proof of \cite[Theorem 5.2]{qiu2017viscosity}, we added two new techniques in the above proof: (i) due to the lack of regularity of coefficients, we construct sequences $\{\hat J_l\}$ in $\mathscr C_{\sF}^1$ to approximate $J(\cdot,\cdot;\theta)$; (ii) lack of spatial integrability of $V$ (or $V$ possibly being nonzero at infinity) motivates us to introduce a penalty function $h$ in the proof to ensure the existence of maximums (for instance, in \eqref{eq-maxima}).
\end{rmk}
 
Throughout the proof of Theorem \ref{thm-max}, we see that only the viscosity subsolution property of $u$ and the property \eqref{relation-sup} of $\hat J_l \in\mathscr C^1_{\sF}$ are used. Hence, omitting the proofs we have the following weak version of comparison principle.

\begin{cor}\label{cor-cmp}
Let $(\cA1)$ hold and $u$ be a viscosity subsolution (resp. supersolution)  of BSPDE \eqref{SHJB}  and $\phi\in\mathscr C^1_{\sF}$, $\phi(T,x)\geq (\text{resp. }\leq) G(x)$ for all $x\in\bR^d$ a.s. and with probability 1
{\small
\begin{align*}
\text{ess}\liminf_{(s,x)\rightarrow (t^+,y)}
	E_{\sF_{t}} \left\{  -\mathfrak{d}_{s}\phi(s,x)-\bH(s,x,D \phi(s,x) ) \right\} \geq 0
\\
\text{(resp. }
\text{ess}\limsup_{(s,x)\rightarrow (t^+,y)}
	E_{\sF_{t}} \left\{  -\mathfrak{d}_{s}\phi(s,x)-\bH(s,x,D \phi(s,x) ) \right\} \leq 0\text{)}
\end{align*}
}
for  all $(t,y)\in[0,T)\times\bR^d$. It holds a.s. that $u(t,x)\leq$ (resp., $\geq$) $\phi(t,x)$,  $\forall \, (t,x)\in[0,T]\times\bR^d$.
\end{cor}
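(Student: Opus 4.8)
The plan is to transcribe the proof of Theorem~\ref{thm-max} essentially word for word, letting the given $\phi$ play the exact role that the approximating field $\hat J_l$ played there. The point, already flagged in the remark preceding the statement, is that the argument for Theorem~\ref{thm-max} used nothing about $\hat J_l$ beyond two properties: that $u$ is a viscosity subsolution and that $\hat J_l\in\mathscr C^1_{\sF}$ satisfies the liminf inequality \eqref{relation-sup}. The corollary grants $\phi\in\mathscr C^1_{\sF}$ together with precisely that inequality as a hypothesis, so the smooth-approximation apparatus of Lemma~\ref{lem-approx-j} is simply removed and one argues directly. Since $u,\phi$ have continuous trajectories, it suffices to prove $u(t,\bar x)\le\phi(t,\bar x)$ a.s.\ at each fixed $(t,\bar x)\in[0,T)\times\bR^d$ and then intersect over a countable dense set; I treat the subsolution case, the supersolution one being symmetric (maxima replaced by minima, $h$ by $-h$, liminf by limsup).

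First I would argue by contradiction, assuming $\bP(\overline\Omega_t)>0$ for $\overline\Omega_t:=\{u(t,\bar x)-\phi(t,\bar x)>\kappa\}$ with some $\kappa>0$. For arbitrary $\eps\in(0,1)$, measurable selection combined with the linear growth of the penalty $h$ (see \eqref{h-linear-growth}) and $u,\phi\in\cS^2(C(\bR^d))$ produces $\xi_t\in L^0(\overline\Omega_t,\sF_t;\bR^d)$ attaining
\[
\alpha:=\max_{x\in\bR^d}\{u(t,x)-\phi(t,x)-\eps h(x-\bar x)\}\ge\kappa\quad\text{on }\overline\Omega_t,
\]
and I reduce to $\overline\Omega_t=\Omega$. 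Reproducing the optimal-stopping construction verbatim, I pick $\sF_s$-measurable maximizers $\xi_s$ of $(u-\phi-\eps h(\cdot-\bar x))^+$, set $Y_s=(u(s,\xi_s)-\phi(s,\xi_s)-\eps h(\xi_s-\bar x))^++\frac{\alpha(s-t)}{2(T-t)}$ with Snell envelope $Z_s=\esssup_{\zeta\in\cT^s}E_{\sF_s}[Y_\zeta]$, and define $\tau=\inf\{s\ge t:Y_s=Z_s\}$ and $\hat\tau=\inf\{s\ge\tau:(u-\phi-\eps h)^+(s,\xi_s)\le0\}$. Here the terminal hypotheses $u(T,\cdot)\le G\le\phi(T,\cdot)$ are exactly what force $Y_T=\frac{\alpha}{2}$, so that the optimal-stopping estimate $E_{\sF_t}Y_T=\frac{\alpha}{2}<\alpha=Y_t\le Z_t$ yields $\bP(\tau<T)>0$, and then $Z_\tau\ge\frac{\alpha}{2}$ gives $\bP(\Omega_\tau)>0$ for $\Omega_\tau:=\{\tau<\hat\tau\}\in\sF_\tau$.

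Next I would form the test field
\[
\Phi(s,x)=\phi(s,x)+\eps h(x-\bar x)-\frac{\alpha(s-t)}{2(T-t)}+E_{\sF_s}Y_\tau,
\]
which lies in $\mathscr C^1_{\sF}$ because $\phi$ does and $(E_{\sF_s}Y_\tau)_s$ is a martingale, hence invisible to $\mathfrak{d}_s$. As in Theorem~\ref{thm-max}, the definitions of $\tau,\hat\tau,\Omega_\tau$ give, for every $\bar\tau\in\cT^\tau$ and a.e.\ $\omega\in\Omega_\tau$, the chain $(\Phi-u)(\tau,\xi_\tau)=0=Y_\tau-Z_\tau\le Y_\tau-E_{\sF_\tau}[Y_{\bar\tau\wedge\hat\tau}]=E_{\sF_\tau}[\inf_{y}(\Phi-u)(\bar\tau\wedge\hat\tau,y)]$, whence $\Phi\in\underline{\cG}u(\tau,\xi_\tau;\Omega_\tau)$. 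The subsolution property of $u$ then bounds the associated essential liminf by $0$. Since $\mathfrak{d}_s\Phi=\mathfrak{d}_s\phi-\frac{\alpha}{2(T-t)}$ and $D\Phi=D\phi+\eps Dh(\cdot-\bar x)$, and since $\bH(s,x,\cdot)$ is Lipschitz in $p$ with constant $\sup_{v}|\beta(s,x,v)|\le L$ while $|Dh|\le1$, I peel off the term $\frac{\alpha}{2(T-t)}\ge\frac{\kappa}{2(T-t)}$ and absorb the $h$-perturbation into an error at most $\eps L$; invoking the hypothesized liminf inequality for $\phi$ at $(\tau,\xi_\tau)$ leaves $0\ge\frac{\kappa}{2(T-t)}-\eps L$, impossible once $\eps$ is small.

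I expect the only delicate point to be the passage from the hypothesis on $\phi$, stated for deterministic $(t,y)$, to its use at the random pair $(\tau,\xi_\tau)$. This is precisely the aggregation/measurable-selection issue already present (and resolved) in the proof of Theorem~\ref{thm-max}, where the same step is taken for $\hat J_l$ via \eqref{relation-sup}; it is compatible with the way the cones $\underline{\cG}u(\tau,\xi;\Omega_\tau)$ are defined over stopping times and $\sF_\tau$-measurable selectors. Everything else is a faithful re-run of the established argument, which is exactly why the authors may omit the proof.
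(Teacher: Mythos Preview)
Your proposal is correct and follows exactly the approach the paper itself indicates: the authors explicitly state that the proof of Theorem~\ref{thm-max} uses only the viscosity subsolution property of $u$ and the liminf inequality \eqref{relation-sup} for $\hat J_l\in\mathscr C^1_{\sF}$, and hence omit the proof of the corollary. Your transcription with $\phi$ in place of $\hat J_l$, including the correct use of the terminal condition $u(T,\cdot)\le G\le\phi(T,\cdot)$ to force $Y_T=\alpha/2$ and the replacement of $\beta_l$ by $\beta$ in the final Lipschitz estimate, is precisely what is required.
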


\subsection{Uniqueness of viscosity solution}
 We shall prove the uniqueness on basis of the established comparison results. First, we approximate the coefficients $\beta,\,f$ and $G$ via regular functions.
\begin{lem}\label{lem-approx}
 Let $(\cA1)$ hold. For each $\eps>0$, there exist partition $0=t_0<t_1<\cdots<t_{N-1}<t_N=T$ for some $N>3$ and functions 
$$(G^N,f^N,\beta^N)\in C^{3}(\bR^{m_0\times N}\times\bR^d)\times C( U;C^3([0,T]\times\bR^{m_0\times N}\times\bR^d)) \times C(U;C^3([0,T]\times\bR^{m_0\times N}\times\bR^d))$$  
such that 
\begin{align*}
&
	G^{\eps}:=\esssup_{x\in\bR^d} \left|G^N( W_{t_1},\cdots, W_{t_N},x)-G(x) \right|,
\\
& 
	f^{\eps}_t :=\esssup_{(x,v)\in\bR^d\times U}
		\left|f^N( W_{t_1\wedge t},\cdots, W_{t_N\wedge t},t,x,v)-f(t,x,v)\right|,\quad \text{for } t\in  [0,T],
\\
&
 	\beta^{\eps}_t :=\esssup_{(x,v)\in\bR^d\times U}
		\left|\beta^N( W_{t_1\wedge t},\cdots, W_{t_N\wedge t},t,x,v)-\beta(t,x,v)\right|,\quad \text{for } t\in  [0,T],
\end{align*}
 are $\sF_t$-adapted with
\begin{align*}
	\left\| G^{\eps}  \right\|_{L^2(\Omega,\sF_T;\bR)} + \left\| f^{\eps}  \right\|_{\cL^2(\bR)}   + \left\| \beta^{\eps}  \right\|_{\cL^2(\bR^d)}    <\eps,
\end{align*}
and $G^N$, $f^N$ and $\beta^N$ are uniformly Lipschitz-continuous in the space variable $x$ with an identical Lipschitz-constant $L_c$ independent of $N$ and $\eps$.
\end{lem}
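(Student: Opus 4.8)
The plan is to construct $(G^N,f^N,\beta^N)$ by a three-fold approximation and to track, at each stage, both the spatial Lipschitz constant and the adaptedness. First I would reduce the $\omega$-dependence to finitely many deterministic profiles by a simple-function (piecewise-constant in $\omega$) approximation; then I would replace these profiles' $\sF_T$- (resp.\ $\sF_t$-) measurable coefficients by functions of the finitely many Wiener values $W_{t_1},\dots,W_{t_N}$, via conditional expectation; and finally I would mollify to reach the required $C^3$-regularity. The decisive structural input is that $G\in L^{\infty}(\Omega,\sF_T;H^{1,\infty})$ and $f(\cdot,\cdot,v),\beta(\cdot,\cdot,v)\in\cS^{\infty}(H^{1,\infty})$ are strongly measurable into $H^{1,\infty}$, so their essential ranges are separable; this separability is exactly what allows the approximation to be \emph{uniform over the unbounded space} $\bR^d$, rather than merely pointwise in $x$.

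I would treat $G$ first. By strong measurability, approximate $G$ in $L^2(\Omega,\sF_T;H^{1,\infty})$ by a simple field $\bar G=\sum_{j=1}^J 1_{A_j}\phi_j$, where $\{A_j\}\subset\sF_T$ is a finite partition and $\phi_j\in H^{1,\infty}$ with $\|\phi_j\|_{H^{1,\infty}}\le L$; since the norm ultimately imposed on $G^{\eps}$ is only $L^2(\Omega)$, the small-probability tail coming from truncating a countable range-decomposition is harmless. Next, for a partition $0=t_0<\cdots<t_N=T$ fine enough that $\cG_N:=\sigma(W_{t_1},\dots,W_{t_N})$ approximates $\sF_T$, replace each $1_{A_j}$ by $E[1_{A_j}\,|\,\cG_N]$, which converges to $1_{A_j}$ in $L^2$ and, being linear in the indicators, automatically satisfies $\sum_j E[1_{A_j}\,|\,\cG_N]\equiv1$. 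Mollifying in the Wiener variables $w\in\bR^{m_0\times N}$ yields smooth $a_j^N(w)\in[0,1]$ with $\sum_j a_j^N\equiv1$ (mollification is linear and preserves constants), and mollifying $\phi_j$ in $x$ gives $\phi_j^{\delta}\in C^{\infty}$ with $\mathrm{Lip}_x(\phi_j^{\delta})\le L$. Setting $G^N(w,x)=\sum_j a_j^N(w)\phi_j^{\delta}(x)$, the partition-of-unity property forces $|D_xG^N|\le L\sum_j a_j^N=L$, so the Lipschitz constant is preserved; and because there are only finitely many $j$, the quantity $\big\|\esssup_x|G^N(W_{t_1},\dots,W_{t_N},x)-G(\cdot,x)|\big\|_{L^2(\Omega)}$ is bounded by $\sum_j\|\phi_j\|_{\infty}\|a_j^N(W_{t_1},\dots,W_{t_N})-1_{A_j}\|_{L^2(\Omega)}$ plus the simple-function and $x$-mollification errors, all of which are made $<\eps$.

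For $f$ and $\beta$ the same scheme applies once the time and control variables are accommodated. Using that $f(\cdot,\cdot,v)\in\cS^{\infty}(H^{1,\infty})$ has a.s.\ continuous trajectories, I discretize time on the same partition; since the required norm is the time-integrated $\cL^2$-norm, a.s.\ continuity and dominated convergence make the discretization error small with no uniform-in-$t$ control needed. Adaptedness is precisely what the truncation $W_{t_i\wedge t}$ secures: for $t\in[t_{i-1},t_i)$ the arguments collapse to $W_{t_1},\dots,W_{t_{i-1}},W_t$, all $\sF_t$-measurable. The control variable is handled by compactness of $U$ together with the uniform continuity in $(x,v)$ granted by $(\cA1)$(ii): a finite cover of $U$ and a continuous partition of unity in $v$ reduce the construction to finitely many $v$-profiles, after which mollification in $x$ (and in $t$, for the abstract function, to attain $C^3$) completes the construction while leaving the common spatial Lipschitz constant $L_c=L$ intact.

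The main obstacle is to obtain the approximation \emph{uniformly over the unbounded domain} $\bR^d$ (and over $U$) while simultaneously securing the $C^3$-smoothness, the adaptedness encoded by $W_{t_i\wedge t}$, and a spatial Lipschitz constant independent of $N$ and $\eps$. A naive cylindrical or conditional-expectation approximation does \emph{not} converge in the $\esssup_x$ sense for an arbitrary $\sF_T$-measurable Lipschitz field; it is exactly the separability of the essential range (strong measurability into $H^{1,\infty}$) that collapses the $\omega$-dependence onto finitely many deterministic profiles $\phi_j$, and the partition-of-unity device that converts the scalar approximation of the coefficients $1_{A_j}$ into a uniform, Lipschitz-preserving approximation of the entire random field. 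Getting all four requirements to hold at once, rather than any one in isolation, is the crux of the argument.
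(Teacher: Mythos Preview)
Your proposal is correct and follows the same density-argument outline as the paper's (sketched) proof: time-discretize, reduce to simple random fields $\sum_j 1_{A_j}\phi_j$ with $A_j$ in the appropriate $\sigma$-algebra, replace the indicators by smooth functions of finitely many Wiener values, and mollify in the remaining variables. The one noteworthy tactical difference is that the paper approximates each $1_{A_j}$ directly by $C^\infty_0$ cylinder functions via {\O}ksendal's density lemma, whereas you first pass to the conditional expectation $E[1_{A_j}\mid\sigma(W_{t_1},\dots,W_{t_N})]$ and then mollify in the Wiener variables; your route has the pleasant side effect that the resulting coefficients $a_j^N$ automatically form a partition of unity, which makes the uniform spatial Lipschitz bound $|D_xG^N|\le L\sum_j a_j^N=L$ transparent---a point the paper's sketch asserts but does not detail.
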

  Although the proof of Lemma \ref{lem-approx} is an application of standard density arguments, we would sketch the proof for the readers who are interested.
  
  \begin{proof}[Sketched proof of Lemma \ref{lem-approx}]
  Consider the approximations of $f$.  First,
  in a similar way to \cite[case (c) in the proof of Proposition 2.2, Page 29]{gawarecki2010stochastic}, the dominated convergence theorem indicates that $f$ may be approximated in $\cL^2(C(U\times \bR^d))$ by random fields of the form:
  $$ \bar f^l(\omega,t,x,v)= \phi_1(\omega, x,v)1_{[0,t_1]}(t)+\sum_{j=2}^l\phi_j(\omega,x,v) 1_{(t_{j-1},t_j]}(t), $$
  where $0=t_0< t_1< \cdots< t_l<T$, and for $j=1,\dots,l$, $\phi_j=f(t_{j-1},\cdot,\cdot,\cdot)\in L^2(\Omega,\sF_{t_{j-1}}; C (U\times\bR^d))$. In fact, with the identity approximations as in \eqref{appr-l}, we may take instead 
  $$\phi_j\in L^2(\Omega,\sF_{t_{j-1}};
   C (U, C^{\infty}(\bR^d)),\quad j=1,\dots, l.$$ Further, for each $j\geq 2$, $\phi_j$ may be approximated \textit{monotonically} (see \cite[Lemma 1.2, Page 16]{da2014stochastic} for instance) by simple random variables of the following form:
  $$
  \sum_{i=1}^{l_j}1_{A_i^j} (\omega) h^j_i(x,v),\quad \text{with }h_i^j\in C(U;C^{\infty}(\bR^d)),\quad A_i^j\in\sF_{t_{j-1}}, \quad i=1,\dots, l_j,
  $$
  and  \cite[Lemma 4.3.1., page 50]{oksendal2003stochastic} implies that each $1_{A_i^j}$ may be approximated in $L^2(\Omega,\sF_{t_{j-1}})$ by functions in the following set
  $$
  \{g(W_{\tilde t_1},\dots,W_{\tilde t_{l^j_i}} ):\, \tilde t_r\in[0,t_{j-1}],\,\,g\in  C^{\infty}_0(\bR^{l_i^j}) \}.
  $$ 
  In addition, each $1_{(t_{j-1},t_j]}$ may be increasingly approximated by compactly-supported nonnegative functions $\varphi_j\in C^{\infty}((t_{j-1},T];\bR)$. To sum up, $f$ may be approximated in $\cL^2(C(U\times \bR^d))$ by the following random fields:
  \begin{align*}
  f^N( W_{\bar t_1\wedge t},\cdots, W_{\bar t_N\wedge t},t,x,v)
  =\sum_{j=1}^l\sum_{i=1}^{l_j} g_i^j(W_{\bar t_1\wedge t_{j-1}},\cdots, W_{\bar t_N\wedge t_{j-1}})h_i^j(x,v) \varphi_j(t),
    \end{align*}
  where $0=\bar t_0 <\bar t_1<\cdot<\bar t_{N-1}<\bar t_{N}=T$, and $g_i^j,\,h_i^j,\, \varphi_j$ are smooth functions. The required approximations for $G$ and $\beta$ follow similarly. 
  \end{proof}

    We are now ready to present the proof for the uniqueness of viscosity solution.

\begin{proof}[Proof of Theorem \ref{thm-main}]

Define 
\begin{align*}
\overline{\mathscr V}=
\bigg\{
	\phi\in\mathscr C^1_{\sF}:&\, \,\phi(T,x)\geq G(x)\,\,\forall x\in\bR^d, \text{ a.s., and with probability 1,}\\
&
	\text{ess}\liminf_{(s,x)\rightarrow (t^+,y)}	E_{\sF_t}\left[ -\mathfrak{d}_{s}\phi(s,x)-\bH(s,x,D\phi(s,x) ) \right]
	\geq 0,	\quad \forall (t,y)\in[0,T)\times\bR^d
	\bigg\}\\
\underline{\mathscr V}=
\bigg\{
	\phi\in\mathscr C^1_{\sF}:&\, \phi(T,x)\leq G(x)\,\,\,\forall x\in\bR^d, \text{ a.s., and with probability 1,}\\
&
		\text{ess}\limsup_{(s,x)\rightarrow (t^+,y)}  E_{\sF_t}\left[ -\mathfrak{d}_{s}\phi(s,x)-\bH(s,x,D\phi(s,x) ) \right] 
		\leq 0,\quad \forall (t,y)\in[0,T)\times\bR^d
	\bigg\},
\end{align*}
and set
\begin{align*}
\overline{u}=\essinf_{\phi\in \overline{\mathscr V}} \phi, \quad 
\underline{u}=\esssup_{\phi\in \underline{\mathscr V}} \phi.
\end{align*}
In view of Corollary \ref{cor-cmp}, for any viscosity solution $u\in \cS^{2}(C(\bR^d))$ we have $\underline u\leq u\leq \overline u$.  Therefore,  for the uniqueness of viscosity solution, it is sufficient to check $\underline u=V= \overline u$. 

Let $(\Omega',\sF',\{\sF'_t\}_{t\geq 0}, \bP')$ be another complete filtered probability space which carries a d-dimensional standard Brownian motion $B=\{B_t\, :\, t\geq 0\}$ with $\{\sF'_t\}_{t\geq 0}$ generated by $B$ and augmented by all the $\bP'$-null sets in $\sF'$. Set
$$
(\bar\Omega,\bar\sF,\{\bar\sF_t\}_{t\geq 0},\bar\bP)=
(\Omega\times\Omega',\sF \otimes\sF',\{\sF_t\otimes\sF'_t \}_{t\geq 0},\bP \otimes\bP').
$$
Then $B$ and $W$ are independent on $(\bar\Omega,\bar\sF,\{\bar\sF_t\}_{t\geq 0},\bar\bP)$ and it is easy to see that all the theory established in previous sections still hold on the enlarged probability space.


For each fixed $\eps\in(0,1)$, choose $(G^{\eps},\,f^{\eps},\,\beta^{\eps})$ and $(G^N,f^N,\beta^N)$ as in Lemma \ref{lem-approx}. Recalling the standard theory of backward SDEs (see \cite{Hu_2002} for instance), let the  pairs  $(Y^{\eps},Z^{\eps})\in \cS^2_{\sF}(\bR)\times \cL^2_{\sF}(\bR^{m})$ and $(y_t,z_t)\in  \cS^2_{\sF'}(\bR)\times \cL^2_{\sF'}(\bR^{d})$   be the solutions  of backward SDEs
$$
Y_s^{\eps}=G^{\eps}+
	\int_s^T\left(f^{\eps}_t+K\beta^{\eps}_t\right)\,dt
		-\int_s^TZ^{\eps}_s\,d W_s,
$$
 and 
 $$
 y_s=
 |B_T|+\int_{s}^T |B_r| \,dr-\int_s^T z_r\,dB_r,
 $$
respectively, and for each $(s,x)\in[0,T)\times\bR^d$,  set
{\small
\begin{align*}
{V}^{\eps}(s,x)
&=\essinf_{\theta\in\cU} E_{\bar\sF_s} \left[
	\int_s^Tf^N\left( W_{t_1\wedge t},\cdots, W_{t_N\wedge t},t,X^{s,x;\theta,N}_t,\theta_t\right)\,dt
		+G^N\left( W_{t_1},\cdots, W_{t_N},X^{s,x;\theta,N}_T\right)
		\right],
\end{align*}
}
where the constant $K\geq 0$ is to be determined later and $X^{s,x;\theta,N}_t$ satisfies the SDE
\begin{equation*}
\left\{
\begin{split}
&dX_t=\beta^N(t,X_t,\theta_t)dt +\delta_N dB_t,\,\,
\,t\in[s,T]; \\
& X_s=x
\end{split}
\right.
\end{equation*}
with $\delta_N>0$ being a constant.
 
 By the viscosity solution theory of fully nonlinear parabolic PDEs (see \cite[Theorems I.1 and II.1]{lions-1983} for instance),
  when $s\in[t_{N-1},T)$, 
$$V^{\eps}(s,x)=\tilde V^{\eps}(s,x,  W_{t_1},\cdots, W_{t_{N-1}},{W}_s)$$ 
with
{\small
\begin{align*}
&\tilde{V}^{\eps}(s,x,  W_{t_1},\cdots, W_{t_{N-1}},y)\\
&=\essinf_{\theta\in\cU} E_{\bar\sF_s,  W_s=y} \left[
	\int_s^Tf^N\left(  W_{t_1},\cdots, W_{t_{N-1}}, W_{t_N\wedge t},t,X^{s,x;\theta,N}_t,\theta_t\right)\,dt
		+G^N\left( W_{t_1},\cdots, W_{t_N},X^{s,x;\theta,N}_T\right)
		\right]
\end{align*}
}
satisfying the HJB equation of the following form
{\small
\begin{equation}\label{HJB-N}
  \left\{\begin{array}{l}
  \begin{split}
  -D_tu(t,x,y)=\,& 
  \frac{1}{2} \text{tr}\left(D_{yy}u(t,x,y)\right) 
  +\frac{\delta_N^2}{2} \text{tr}\left( D_{xx}u(t,x,y)\right) \\
  &+\essinf_{v\in U} \bigg\{   
		  (\beta^N)'(  W_{t_1},\cdots, W_{t_{N-1}},y,t,x,v)D_xu(t,x,y)\\
	  &+f^N(W_{t_1},\cdots, W_{t_{N-1}},y,t,x,v)\bigg\},
                     \quad (t,x,y)\in [t_{N-1},T)\times\bR^d\times\bR^{m_0};\\
    u(T,x,y)=\, &G^N(  W_{t_1},\cdots, W_{t_{N-1}},y,x), \quad (x,y)\in\bR^d\times\bR^{m}.
    \end{split}
  \end{array}\right.
\end{equation}
}
%
%
and thus the regularity theory of viscosity solutions (see \cite[Theorem 1.1]{krylov1982boundedly} or \cite[Chapter 6]{Krylov-1987} for instance\footnote{As $U\subset\bR^n$ is a nonempty compact set, it has a denumerable subset $\mathcal K\subset U$ that is dense in $U$, and by the continuity of the coefficients, the essential infimum may be taken over $\mathcal K$. This together with some basic properties of viscosity solutions (see \cite[Proposition 3.7]{wang1992-I} for instance) allows  \cite[Theorem 1.1]{krylov1982boundedly} to be applied straightforwardly.}) gives 
$$\tilde{V}^{\eps}(\cdot,\cdot,  W_{t_1},\cdots, W_{t_{N-1}},\cdot)\in 
L^{\infty}\left(\Omega,\sF_{t_{N-1}}; C^{1+\frac{\bar\alpha}{2},2+\bar\alpha}([t_{N-1},T]\times\bR^d) \right) 
,
$$
for some $\bar\alpha \in (0,1)$, where the \textit{time-space} H\"older space $C^{1+\frac{\bar\alpha}{2},2+\bar\alpha}([t_{N-1},T]\times\bR^d)$ is defined as usual. We can make similar arguments on time interval $[t_{N-2},t_{N-1})$ taking the obtained $V^{\eps}(t_{N-1},x)$ as the terminal value, and recursively on intervals $[t_{N-3},t_{N-2})$, $\dots$, $[0,t_{1})$.
Furthermore, applying the  It\^o-Kunita formula to $\tilde{V}^{\eps}(s,x, \tilde W_{t_1},\cdots,\tilde W_{t_{N-1}},y)$ on $[t_{N-1},T]$ yields that
 {\small
 \begin{equation}\label{SHJB-N}
  \left\{\begin{array}{l}
  \begin{split}
  -dV^{\eps}(t,x-\delta_N B_t)=\,& 
  \essinf_{v\in U} \bigg\{
  (\beta^N)'( W_{t_1},\cdots, W_{t_{N-1}}, W_t,t,x-\delta_N B_t,v) D_xV^{\eps}(t,x-\delta_N B_t)\\
&	
  +f^N( W_{t_1},\cdots, W_{t_{N-1}}, W_t,t,x-\delta_N B_t,v)\bigg\}\,dt\\
&
		-D_{y} \tilde{V}^{\eps}(s,x, \tilde W_{t_1},\cdots,\tilde W_{t_{N-1}},W_t) \,d W_t +\delta_N D_{x} V^{\eps}(t,x-\delta_N B_t)\,dB_t,\\
		&
                     \quad (t,x)\in [t_{N-1},T)\times\bR^d;\\
    V^{\eps}(T,x)=\, &G^N(  W_{t_1},\cdots, W_{t_{N-1}}, W_T,x-\delta_N B_T), \quad x\in\bR^d.
    \end{split}
  \end{array}\right.
\end{equation}
}
It follows similarly on intervals $[t_{N-2},t_{N-1})$, $\dots$, $[0,t_{1})$, and finally we have $V^{\eps}(\cdot,\cdot-\delta_N B_{\cdot}) \in \mathscr C^1_{\bar\sF}$.

In view of the approximation in Lemma \ref{lem-approx} and with an analogy to the proof of (iv) in Proposition \ref{prop-value-func}, there exists $\tilde{L}>0$ such that 
$$
\max_{(t,x)\in[0,T]\times \bR^d}\left\{ |DV^{\eps}(t,x)|   \right\}
\leq \tilde L,\,\,\,\text{a.s.}
$$
with $\tilde L$ being independent of $\eps$ and $N$. Set $K=\tilde L$ and
\begin{align*}
\overline{V}^{\eps}(s,x)
&=
	V^{\eps}(s,x-\delta_N B_s)+Y^{\eps}_s+\delta_N \bar K y_t,\\
\underline{V}^{\eps}(s,x)
&=
	V^{\eps}(s,x-\delta_N B_s)-Y^{\eps}_s-\delta_N \bar K y_t,
\end{align*}
with $\bar K=4L(\tilde L+1)$ and $L$ the constant in $(\cA 1)$.

Notice that 
\begin{align*}
|\beta(t,x,v)-\beta(t,x-\delta_N B_t,v)|
+|f(t,x,v)-f(t,x-\delta_N B_t,v)| 
& \leq 2\delta_N L |B_t|,\\
|G(x)-G(x-\delta_NB_T)|
&\leq \delta_N L |B_T|.
\end{align*}
Then for $\overline V^{\eps}$ on $[t_{N-1},T)$, omitting the inputs for some involved functions, we have
\begin{align}
&-\mathfrak{d}_{t}\overline V^{\eps}-\bH(D\overline V^{\eps})
\nonumber\\
&=
	-\mathfrak{d}_{t}\overline V^{\eps}
	-\essinf_{v\in U} \bigg\{
  		     (\beta^N)'D\overline V^{\eps}
	  +f^N+f^{\eps}+ \tilde L  \beta^{\eps}\nonumber+\delta_N\bar K |B_t|\\
&
	  \quad\quad
	  +   \left(\beta-\beta^N\right)' D\overline V^{\eps}       -\beta^{\eps} \tilde L
	  +f-f^N-f^{\eps}-\delta_N \bar K |B_t|
	   \bigg\}
\nonumber\\
&\geq
	-\mathfrak{d}_{t}\overline V^{\eps}
	-\essinf_{v\in U} \bigg\{
  		      (\beta^N)'D\overline V^{\eps}
	  +f^N+f^{\eps}+\beta^{\eps} \tilde L 
	  +\delta_N \bar K |B_t|
	   \bigg\}
	 \label{est-A4}\\
&=0,\nonumber
\end{align}
and it follows similarly on intervals $[t_{N-2},t_{N-1})$, $\dots$, $[0,t_1)$ that
$$
-\mathfrak{d}_{t}\overline V^{\eps}-\bH(D\overline V^{\eps} ) \geq 0,
$$
which together with the obvious relation  $\overline V^{\eps}(T)=G^{\eps}+G^N+ \delta\bar K |B_T| \geq G$  indicates that $\overline V^{\eps}\in \overline {\mathscr V}$. Analogously, $\underline V^{\eps}\in \underline {\mathscr V}$.

Now let us measure the distance between $\underline V^{\eps}$, $\overline V^{\eps}$ and $V$. By the estimates for solutions of backward SDEs (see \cite[Proposition 3.2]{Hu_2002} for instance), we first have 
\begin{align*}
\|Y^{\eps}\|_{\cS^2(\bR)} + \|Z^{\eps}\|_{\cL^2(\bR^{m_0})}
&
	\leq  C \left(  \|G^{\eps}\|_{L^2(\Omega,\sF_T;\bR)} + \|f^{\eps}+\tilde L \beta^{\eps}\|_{\cL^2(\bR)}  \right)
\\
&
	\leq  C(1+\tilde L)\eps
\end{align*}
with the constant $C$ independent of $N$ and $\eps$.  Fix some $(s,x)\in[0,T)\times \bR^d$. In view of the approximation in Lemma \ref{lem-approx}, using It\^o's formula, Burkholder-Davis-Gundy's inequality, and Gronwall's inequality, we have through standard computations that for any $\theta\in\cU$,
\begin{align*}
&
	E_{\sF_s} \left [  \sup_{s\leq t\leq T} \left|  X^{s,x;\theta,N}_t-X^{s,x;\theta}_t   \right| ^2  \right]
\\
&\leq
	\tilde K\left(\delta_N ^2+ E_{\sF_s}\int_s^T\left| \beta^N\left(\tilde W_{t_1\wedge t},\cdots,\tilde W_{t_N\wedge t},t,X^{s,x;\theta,N}_t,\theta_t\right)-\beta\left(t,X^{s,x;\theta,N}_t,\theta_t\right) \right|^2\,dt\right)
	\\
&\leq
\tilde K \left( \delta_N^2+  E_{\sF_s}\int_s^T   \left|\beta^{\eps}_t\right|^2\,dt\right) ,
\end{align*}
with $\tilde K$ being independent of $s,\, x,\,N$, $\eps$ and $\theta$. Then
\begin{align*}
&E\left| V^{\eps}(s,x)-V(s,x)\right|
\\
&	
	\leq
		E\esssup_{\theta\in\cU}
		E_{\sF_s}\bigg[ 
		\int_s^T\Big( f^{\eps}_t +\Big| f\left(t,X^{s,x;\theta,N}_t,\theta_t\right) -f\left(t,X^{s,x;\theta}_t,\theta_t\right)\Big| \Big)\,dt
\\
&\quad\quad\quad
		+G^{\eps} +\Big| G\left(X^{s,x;\theta,N}_T\right) -G\left(X^{s,x;\theta}_T \right)\Big|
		\bigg] 
\\
&
	\leq E|Y^{\eps}_s|
		+
		2L(T^{1/2}+1)(\tilde K +1) \left( \delta_N + E \esssup_{\theta\in\cU}
		\left(
		E_{\sF_s}\int_s^T   \left|\beta^{\eps}_t\right|^2\,dt
		 \right)^{1/2}\right)
\\
&
	\leq \left\| Y^{\eps}\right\|_{\cS^2(\bR)} +2L(\tilde K+1) (T^{1/2}+1) \left( \delta_N + \left\|  \beta^{\eps} \right\|_{\cL^2(\bR^d)} \right)
\\
&
	\leq K_0 (\eps+\delta_N),
\end{align*}
with the constant $K_0$ being independent of $N$, $\eps$ and $(s,x)$. Furthermore, in view of the definitions of $\overline V^{\eps}$ and $\underline V^{\eps}$, there exists some constant $K_1$ independent of $\eps$ and $N$ such that
\begin{align*}
E\left| \overline V^{\eps}(s,x)-V(s,x)\right|
+E\left| \underline V^{\eps}(s,x)-V(s,x)\right|
\leq K_1 (\eps+\delta_N), \quad \forall \, (s,x)\in [0,T]\times\bR^d.
\end{align*}
The arbitrariness of $(\eps,\delta_N)$ together with the relation $\overline V^{\eps}\geq V \geq \underline V^{\eps}$ finally implies that $\underline u=V= \overline u$. 
\end{proof}
\begin{rmk}\label{rmk-superparab}
   In the above proof, by enlarging the original filtered probability space with an independent Brownian motion $B$,  we have actually constructed the regular approximations of $V$ with a regular perturbation induced by $\eps  B$, which does not necessitate the superparabolicity assumed in \cite{qiu2017viscosity}. This method may help to address the uniqueness of viscosity solution for certain classes of fully nonlinear degenerate stochastic HJB equations. Nevertheless, we would not do such a generalization in order to focus on the study of stochastic HJ equations in this work.
\end{rmk}


%
\bibliographystyle{acm}

\end{document}